\newcommand{\mygraphic}[1]{\includegraphics[height=#1]{env}}
\newcommand{\myenv}{(\raisebox{0pt}{\mygraphic{.6em}})}
\numberwithin{equation}{section}
\newtheorem{defin}{Definition}[section]
\newtheorem{theorem}[defin]{Theorem}
\newtheorem{lemma}[defin]{Lemma}
\theoremstyle{definition} {\newtheorem{remark}[defin]{Remark}}
\DeclareMathOperator{\diam}{diam}
\newcommand{\smfrac}[2]{{\textstyle \frac{#1}{#2}}}
\def\b{\big}
\def\B{\Big}
\def\bg{\bigg}
\def\bsep{\,\b|\,}
\def\Bsep{\,\B|\,}
\def\R{\mathbb{R}}
\def\N{\mathbb{N}}
\def\dist{\mathrm{dist}}
\DeclareMathOperator*{\argmax}{{\rm argmax}}
\def\CC{\mathrm{C}}
\newcommand{\lo}{\lambda_\Omega}
\newcommand{\Lo}{\Lambda_\Omega}
\def\<{\langle}
\def\>{\rangle}
\def\dd{\,{\rm d}}
\def\dt{\,{\rm d}t}
\def\ds{\,{\rm d}s}
\def\AXint#1#2#3{{\setbox0=\hbox{$#1{#2#3}{\int}$}
\vcenter{\hbox{$#2#3$}}\kern-.5\wd0}}
\def\E{\mathcal{E}}
\def\Om{\Omega}
\def\Dom{\mathcal{D}}
\def\rhobar{{\bar{\rho}}}
\title[Dislocation dynamics: time estimates on collisions]{Properties of screw dislocation dynamics: time estimates on boundary and interior collisions}
\author{Thomas Hudson}
\address{Mathematics Institute, Zeeman Building, University of Warwick, Coventry, CV4 7AL, United Kingdom}
\email[T.~Hudson]{t.hudson.1@warwick.ac.uk}
\author{Marco Morandotti}
\address{Fakult\"at f\"ur Mathematik, Technische Universtit\"at M\"unchen, Boltzmannstrasse, 3, 85748 Garching, Germany}
\email[M.~Morandotti~\myenv]{marco.morandotti@ma.tum.de}
\date{7 March 2017}
\begin{document}

\maketitle

\begin{abstract}
In this paper, the dynamics of a system of a finite number of screw dislocations is studied.
Under the assumption of antiplane linear elasticity, the two--dimensional dynamics is determined by the renormalised energy.
The interaction of one dislocation with the boundary and of two dislocations of opposite Burgers moduli are analysed in detail and estimates on the collision times are obtained.
Some exactly solvable cases and numerical simulations show agreement with the estimates obtained.
\end{abstract}


\smallskip

\noindent\textbf{Keywords}:
Dislocation dynamics, boundary behaviour, collisions, renormalised energy.

\smallskip
\noindent\textbf{2010 MSC}: 
70Fxx 
(37N15, 
74H05, 
82D25) 

\tableofcontents

\section{Introduction}

Dislocations are topological line defects found in crystalline solids, and their motion governs the
plastic flow in such materials.
As a consequence, they are objects of great interest to materials scientists and engineers; despite
having been initially studied over a century ago \cite{V1907}, and having been proposed as the atomic mechanism for plasticity \cite{Orowan34,Polanyi34,Taylor34},
their collective behaviour remains a topic of ongoing research, both since they interact at long range
via the stress fields they induce in the crystal, and because of their inherent complexity as a network
of curves.

A variety of works in the mathematical literature have begun to address questions relating to dynamical models
of dislocation motion \cite{ADLGP14,ADLGP16,BFLM15,BM17,BvMM15,CEH10,FM09,GM10,MP12,vMM14}, and this paper contributes to that ongoing thread of research by studying various
properties of a model for the dynamics, in particular collisions, of straight screw dislocations in a long straight cylinder.
It is worth mentioning that the dynamics of screw dislocations has significant similarities to that of Ginzburg-Landau vortices in two dimensions \cite{BBH94,SS07}.
Properties of vortices up to collision time have been studied extensively (see, \emph{e.g.}, \cite{BOS05,JS98,L96,S07} and the references therein), but the results presented here are the first ones, to the best of our knowledge, to provide sharp estimates on the collision times for dislocations.

The model we consider was first proposed in full generality in \cite{CG99}, and studied extensively with specific choices of mobility in
\cite{BFLM15,BvMM15}. 
In particular, the latter two works prove existence of the evolution by differing methods, but acknowledge that blow--up of solutions appears to be a ubiquitous phenomenon. 
Generically, this blow--up seems to occur either via a collision between two dislocations, or the collision of a dislocation and the boundary. 
Here, we rigorously analyse three properties of the model proposed in \cite{CG99}, namely we analytically investigate
\begin{itemize}
\item[(i)] the behaviour of dislocations near a free boundary, 
\end{itemize}
and the blow--up of solutions in detail when: 
\begin{itemize}
\item[(ii)] one dislocation collides with the boundary; 
\item[(iii)] two dislocations collide with one another.
\end{itemize}
In particular, we provide geometric conditions on the initial configuration of dislocations to ensure that blow--up occurs due to (ii) or (iii).
In so doing, we give estimates on the blow--up time and establish a sufficient condition ensuring that no other collision events occur before this blow--up time.

\smallskip

Following the standard mechanical setting of \cite{CG99}, we consider a finite number of idealised screw dislocations in an infinite cylinder $\Omega\times\R$ undergoing an antiplane deformation.
Since the displacement only occurs in the vertical direction, this entails two major simplifications, namely, that the problem can be studied in the two-dimensional cross--section and that the \emph{Burgers vectors} measuring the lattice mismatch are indeed scalar quantities (the \emph{Burgers moduli} being directed along the vertical axis).

We assume that the cylindrical domain has a constant cross--section,
$\Om\subset\R^2$, which is an open connected set with a $\CC^2$ boundary. 
In particular, this regularity
assumption entails that the boundary satisfies \emph{uniform interior and exterior disk conditions}; 
that is, there exists $\rhobar>0$ such that
for any point $x\in\partial\Om$, there exist unique points
$x_\text{int}$ and $x_\text{ext}$ such that
\begin{equation}\label{1011}
  B_\rhobar(x_\text{int})\subseteq \Om,\quad
  B_\rhobar(x_\text{ext})\subset \Om^c\quad\text{and}\quad
  \partial B_\rhobar(x_\text{int})\cap\partial\Om\cap
  \partial B_\rhobar(x_\text{ext}) = \{x\},
\end{equation}
where $B_r(x)\subset\R^2$ is the open disk of radius $r$ centred at
$x$ and the superscript $c$ denotes the complement of a set. 
We shall fix such $\rhobar>0$ once and for all.
It also follows that the curvature of the boundary, $\kappa$, lies
in $\CC(\partial\Om)$, and $\|\kappa\|_{\infty}\leq\rhobar^{-1}$. 
An illustration of the interior and exterior disk conditions is contained in \eqref{iedc}.

\begin{figure}[h]
\begin{center}
\includegraphics[scale=.25]{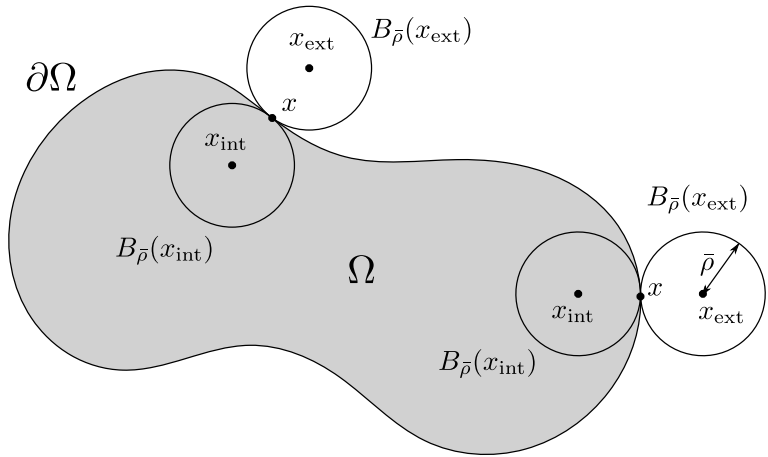}
\caption{The uniform interior and exterior disk condition, with the interior and exterior disks illustrated for two points.}
\label{iedc}
\end{center}
\end{figure}

%

\subsection{Renormalised energy and Peach--Koehler forces}
Since dislocations are topological singularities, the conventional
continuum elastic energy of the strain field induced by a dislocation
is infinite. This reflects the
fact that a dislocation is fundamentally a discrete phenomenon, which
is not adequately captured by continuum elasticity.
Instead, a process of \emph{renormalisation} can be used to provide
a potential energy for a collection of straight screw dislocations.
This process involves the subtraction of an `infinite' constant from
the potential energy, reflecting the fact that continuum elasticity
is simply an asymptotic description of matter in the case where large
variations occur on length--scales much greater than that of the lattice
spacing.
The resulting \emph{renormalised energy} appears in numerous studies of 
topological singularities, see for example \cite{ADLGP14,BBH94,CL05,SS03,SS07}. 

One approach to justifying the use of the renormalised energy is to
define it via the \emph{core-radius approach} as in \cite{BM17,CL05}. Here, we
proceed directly to a definition of the renormalised energy, given in
terms of Green's functions for the Laplacian on the domain $\Om$.

Define the Green's function of the Laplacian with Dirichlet boundary 
conditions on $\partial\Om$ as the (distributional) solution to
\begin{equation}\label{eq:G_Om_problem}
\begin{cases}
  -\Delta_x G_\Om(x,y) = \delta_y(x) & \text{in }\Om, \\
  G_\Om(x,y) = 0 & \text{on }\partial\Om.
\end{cases}
\end{equation}
Here $\delta_y$ is the usual Dirac delta distribution centred
at a point $y\in\Om$.
We emphasise the domain $\Om$ on which this function is defined, since we will later vary this domain in order to obtain our estimates.
It is a classical result that $G_\Om$ is smooth in the variable $x$ on the set $\Om\setminus\{y\}$ for any
given $y\in\Om$; is symmetric, i.e. $G_\Om(x,y) = G_\Om(y,x)$; and 
\begin{equation}\label{eq:Green}
  G_\Om(x,y) = -\frac1{2\pi}\log|x-y|+ k_\Om(x,y),
\end{equation}
where $k_\Om(x,y)$ is smooth in both arguments on $\Om$, is symmetric, and satisfies the elliptic boundary value problem
\begin{equation}\label{eq:k_Om_problem}
\begin{cases}
  -\Delta_x k_\Om(x,y) = 0 & \text{in }\Om, \\
  k_\Om(x,y) = \frac1{2\pi}\log|x-y| &\text{on }\partial\Om;
\end{cases}
\end{equation}
proofs of all of the above assertions may be found in Chapter~4 of \cite{H14}.
In addition, we also define 
\begin{equation}\label{eq:h_Om}
h_\Om(x):=k_\Om(x,x),
\end{equation}
which will turn out to be a convenient function with which to express the renormalised energy.
By exploiting conformal transformations in $\R^2$, it can be shown that $h_\Om$ satisfies the elliptic problem (see \cite{CF85}, Exercise~1, p548 of \cite{Friedman}, and \cite{G90})
\begin{equation}
  -\Delta_x h_\Om(x) = \smfrac2\pi \mathrm{e}^{-4\pi h_\Om(x)}\quad
  \text{for all }x\in\Om.\label{eq:h_Om_problem}
\end{equation}
Its properties will be studied in Section \ref{sect:estimates}.

Using the explicit expression for the Green's function \eqref{eq:Green} and the functions defined in \eqref{eq:k_Om_problem} and \eqref{eq:h_Om}, the renormalised energy of $n$ dislocations with positions $z_1,\ldots,z_n\in\Om$ and Burgers moduli $b_1,\ldots,b_n\in\{-1,+1\}$ (see, \emph{e.g.}, \cite{ADLGP14,BM17,CG99}) may be expressed as
\begin{equation}\label{renen}
  \E_n(z_1,\ldots,z_n):= 
  \sum_{i<j}b_ib_j\bigg(k_\Omega(z_i,z_j)-\frac1{2\pi}\log|z_i-z_j|\bigg) + \frac12 \sum_{i=1}^n b_i^2 h_\Omega(z_i),
\end{equation}
where the contributions of the two--body
interaction terms and the one--body `self--interaction' term are highlighted.
To be more precise, each term $h_\Om(z_i)$ is the contribution to the energy given by a single dislocation
sitting at $z_i$; the logarithmic terms $\log|z_i-z_j|$ account for the interaction energy of the two
dislocations sitting at $z_i$ and $z_j$; the term $k_\Om(z_i,z_j)$ accounts for the interaction of the
dislocation sitting at $z_i$ with the boundary response due to the dislocation sitting at $z_j$.
It is also worth mentioning that the interaction terms also involve the product $b_ib_j$ of the Burgers
moduli of the dislocations in a fashion similar to that of electric charges: $b_i=b_j=\pm1$ gives a positive
contribution to the energy, and tends to push two dislocations with the same sign far away from each other.
This will become clearer in the expression of the force, responsible for the motion, acting on the
dislocations.
Finally, notice that the terms with the subscript $\Om$ depend in a crucial way on the geometry of the domain
and carry information about the interaction with the boundary.
To be thorough, the energy $\E_n$ defined in \eqref{renen} should also depend on the Burgers moduli $b_1,\ldots,b_n$, but we assume these are attached to the dislocations and do not vary in time, so we suppress this dependence in the interest of concision.

The force acting on a dislocation is the so-called Peach--Koehler force \cite{HL1982} and it is obtained by taking the negative of the gradient with respect to the dislocation position
\begin{equation}\label{502}
f_i(z_1,\ldots,z_n)=-\nabla_{z_i}\E_n(z_1,\ldots,z_n),\qquad\text{for $i=1,\ldots,n$.}
\end{equation}
The subscript $i$ refers to the force experienced by the dislocation at $z_i$ and the dependence on the whole configuration of dislocations $z_1,\ldots,z_n$ highlights the nonlocal character of the Peach--Koehler force.

The law describing the dynamics of the dislocations is therefore expressed as
\begin{equation}\label{500}
\dot z_i(t)=-\nabla_{z_i}\E_n(z_1(t),\ldots,z_n(t)),\qquad\text{for $i=1,\ldots,n$,}
\end{equation}
complemented with suitable initial condition at time $t=0$.
Formula \eqref{500} usually includes a \emph{mobility} function, which here we have taken equal to the identity.
Various suggestions for possible mobility functions can be found in \cite{CG99}; we refer the reader to Section \ref{conclusion} for a discussion on other possible choices that are relevant in our context.
For a specific choice of the mobility, \eqref{500} takes the form of a differential inclusion, and was studied both in \cite{BFLM15} to obtain existence and uniqueness results, and in \cite{BvMM15} from the point of view of gradient flows.

\subsection{Aims}
Our results below rigorously verify a variety of qualitative features
of \eqref{500} for the dynamics of dislocations. It is commonly
observed in numerical simulations that dislocations are attracted to free boundaries and 
that dislocations of opposite signs attract. In fact, as
dislocations approach the boundary, or as 
dislocations with Burgers moduli of opposite sign approach one
another, the renormalised energy diverges to $-\infty$, and hence
solutions of the evolution problem blow up and cease to exist, at least in the
senses considered in \cite{BFLM15,BvMM15}.

In Lemma \ref{th:h_derivative_bound}, we prove a gradient bound for the function $h_\Om$ for points in the vicinity of the boundary: this allows us to treat case (i). 
We prove Theorem \ref{thm:fatal}, which states that the main component of the Peach--Koehler force on a dislocation close to the boundary is directed along the outward unit normal at the boundary point closest to the dislocation, thus demonstrating that free boundaries attract dislocations. 
The result is obtained by characterising the Peach--Koehler force acting on a dislocation sufficiently close to the boundary up to an error which is uniformly bounded in various geometric parameters of the system, namely the mutual distances of the dislocations and the curvature of the boundary.

In Theorem \ref{thm:boundary_collision}, we address (ii): we consider the situation of a dislocation near the boundary, well separated from all the others.
The result that we obtain is an upper bound for the collision time and an estimation of how close to the boundary this dislocation must be in order to collide with it before any other collision event.

In Theorem \ref{thm:collision}, we turn to (iii) where two dislocations of opposite Burgers moduli are close to each other and well separated from the others.
In this case, we again obtain an upper bound for the collision time and conditions on the geometry of the initial configuration which guarantee that no other collision events occur before the two dislocations hit one another.

In both Theorems \ref{thm:boundary_collision} and \ref{thm:collision}, the geometric conditions obtained are invariant under dilation of the coordinate system, but, whereas those needed for Theorem \ref{thm:boundary_collision} explicitly involve the curvature of the domain, those needed for Theorem \ref{thm:collision} only depend on the domain through its diameter (therefore, the regularity of the boundary is not relevant for the latter result).

While these behaviours are expected from a qualitative point of view, the novelty of our results is that sharp estimates on the collision times are provided for the first time.
Moreover, the interaction with the boundary characterised in Theorem \ref{thm:fatal} and the estimates of Theorems \ref{thm:boundary_collision} and \ref{thm:collision} are determined in a scale-invariant way in terms of geometric parameters describing the shape of the domain (through its curvature) and the configuration of the dislocations.
It is worth mentioning that the geometry of the domain and the arrangement of the dislocations are only responsible for the higher order corrections to estimates on the collision times.

\smallskip

The paper is organised as follows: in Section \ref{sect:estimates} we provide some estimates on the functions $G_\Om$
and $h_\Om$ that will be crucial for the rest of the paper.
In Section \ref{sect:attraction}, we state and prove the main theorems about the attracting behaviour of free boundaries, and the estimates on the collision times of one dislocation with the boundary and of two dislocations hitting each other.
These results will be compared in Section \ref{sect:collision} to some explicit cases also discussed in \cite{BFLM15}.
We also include numerical plots for domains (namely the square and the cardioid) which exhibit interesting symmetries.
Finally, in Section \ref{conclusion} we draw some conclusions and discuss other models for the relationship between the Peach--Koehler force and the velocity of dislocations.
In Appendix \ref{appendix} we collect some explicit expressions for the Green's functions for the disc and its exterior used in our analysis.

\section{Preliminaries and estimates of interaction kernels}\label{sect:estimates}

In this section, we collect the series of asymptotic bounds on Green's
functions and related interaction kernels which we will require in our
analysis. A particular focus will be asymptotics for the gradients of these
kernels, since these provide a description of the Peach--Koehler forces acting on dislocations.

An important function in what follows will be $d_n\colon\Omega^n\to[0,+\infty)$, 
which is defined as:
\begin{equation}\label{1001}
  d_n(x_1,\ldots,x_n):=
  \begin{cases}
    \dist(x_1,\partial\Om) & n=1,\\
    \min_i\dist(x_i,\partial\Om)\wedge \min_{i\neq j}|x_i-x_j| &\text{otherwise}.
  \end{cases}
\end{equation}
In the case $n=1$, the function $d_1$ measures the distance of the dislocation from the boundary $\partial\Omega$, whereas, if $n\geq2$, $d_n$ describes the minimal separation among the dislocations and their distance from the boundary.
As will be clear in the sequel, this function arises as the distance between a configuration and some critical set in $\Omega^n$ on which the evolution \eqref{500} ceases to exist.

\smallskip

We now use the descriptions of $G_\Om$, $k_\Om$, and $h_\Om$ as respective
solution of the elliptic problems \eqref{eq:G_Om_problem},
\eqref{eq:k_Om_problem}, and \eqref{eq:h_Om_problem}
along with the comparison principle in order to provide asymptotic
gradient estimates for these functions in a variety of
situations. A key tool will be the following bound, taken from Section~3.4 in \cite{GT}:
let $f\in\CC^0(B_r(0))$ and let
$u\in\CC^2(B_r(0))\cap\CC^0\b(\overline{B_r(0)}\b)$ satisfy $-\Delta u=f$
in $B_r(0)\subset\R^2$; then
\begin{equation}
  |\nabla u(0)|\leq \frac2r\sup_{\partial B_r(0)}|u|
  +\frac{r}{2}\sup_{B_r(0)}|f|.\label{eq:gradient_bound}
\end{equation}

\subsection{Estimates on $\nabla G_\Om$}
Our first results concern an estimate on the boundary behaviour of the gradient of $G_\Om$ and some asymptotic formulae which will be relevant when two dislocations of opposite sign approach one another.

\begin{lemma}
\label{th:G_derivative_bounds}
Suppose that $\Om\subset\R^2$ is a $\CC^2$ domain, that $y\in\Om$ is fixed, and let $\rhobar>0$ be as in \ref{1011}; recall also the definition of $d_1(\cdot)$ from \ref{1001} for $n=1$.
Then:
\begin{enumerate}
\item for $x$ satisfying $d_1(x)<\rhobar$ and $d_1(x)<|x-y|$, we have
\begin{equation}
  |\nabla_x G_\Om(x,y)|\leq \frac{2\b(|y-x_\mathrm{ext}|^2
    -\rhobar^2\b)\b(\rhobar+d_1(x)\b)}{\pi\rhobar^2\b(|x-y|-d_1(x)\b)^2},
    \label{eq:dxG_bound}
\end{equation}
where $B_\rhobar(x_\mathrm{ext})$ is the disk which touches $\partial\Om$
at the point closest to $x$ on the boundary, and
\item for any $x\in\Om$, we have the bound
\begin{equation}
  |\nabla_y G_\Om(x,y)|\leq \frac{1}{2\pi|x-y|}+\frac{1}
    {2\pi d_1(y)}.\label{dyG_bound}
\end{equation}
\end{enumerate}
\end{lemma}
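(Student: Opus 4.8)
The plan is to treat the two estimates separately, since they concern gradients in different variables and have qualitatively different mechanisms. For part (1), the key point is that $G_\Om(\cdot,y)$ is harmonic away from $y$, so we cannot apply the interior gradient bound \eqref{eq:gradient_bound} on a ball that contains $y$; instead we must work on a ball that stays inside $\Om\setminus\{y\}$. Given $x$ with $d_1(x)<\rhobar$ and $d_1(x)<|x-y|$, I would let $x_\mathrm{ext}$ be the center of the exterior disk from \eqref{1011} touching $\partial\Om$ at the closest boundary point to $x$, and choose the ball $B_r(x)$ centered at $x$ with the largest radius $r$ that still avoids both $\partial\Om$ and $y$; concretely $r=|x-y|-d_1(x)$ should work, but one must check $B_r(x)\subseteq\Om\setminus\{y\}$ using $d_1(x)<|x-y|$ to keep $y$ out and $d_1(x)<\rhobar$ together with the exterior disk condition to keep $\partial\Om$ out. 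On this ball $G_\Om(\cdot,y)$ is harmonic, so \eqref{eq:gradient_bound} with $f\equiv0$ gives $|\nabla_x G_\Om(x,y)|\le\frac{2}{r}\sup_{\partial B_r(x)}|G_\Om(\cdot,y)|$. The remaining task is to bound $G_\Om(\cdot,y)$ from above on that sphere: since $G_\Om(\cdot,y)\ge0$ by the maximum principle, and since $\Om$ is contained in the complement of the exterior disk $B_\rhobar(x_\mathrm{ext})$, I would compare $G_\Om(\cdot,y)$ with the Green's function of the exterior domain $\R^2\setminus\overline{B_\rhobar(x_\mathrm{ext})}$, which is explicit (a logarithm of a Möbius-type expression, presumably recorded in Appendix \ref{appendix}). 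Evaluating that explicit comparison function and using the geometry $|x-x_\mathrm{ext}|\le\rhobar+d_1(x)$ should produce the factor $(|y-x_\mathrm{ext}|^2-\rhobar^2)(\rhobar+d_1(x))/\rhobar^2$ up to the constants, and dividing by $r=|x-y|-d_1(x)$ yields \eqref{eq:dxG_bound}.

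For part (2), the bound is on the gradient in the source variable $y$, and here the singularity is genuinely present: as a function of $y$, $G_\Om(x,\cdot)$ blows up like $-\frac1{2\pi}\log|x-y|$ near $y=x$, and its regular part $k_\Om(x,\cdot)$ blows up near $\partial\Om$. The natural approach is to split $G_\Om(x,y)=-\frac1{2\pi}\log|x-y|+k_\Om(x,y)$ as in \eqref{eq:Green} and bound the two gradients separately. The first piece contributes exactly $\frac{1}{2\pi|x-y|}$. For the second, $k_\Om(x,\cdot)$ is harmonic in $\Om$ with boundary data $\frac1{2\pi}\log|x-\cdot|$ on $\partial\Om$, so I would apply \eqref{eq:gradient_bound} with $f\equiv0$ on the ball $B_{d_1(y)}(y)\subseteq\Om$, giving $|\nabla_y k_\Om(x,y)|\le\frac{2}{d_1(y)}\sup_{\partial B_{d_1(y)}(y)}|k_\Om(x,\cdot)|$. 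It then remains to bound $|k_\Om(x,\cdot)|$ on that sphere by $\frac1{4\pi}$; by the maximum principle $|k_\Om(x,w)|\le\frac1{2\pi}\max_{\zeta\in\partial\Om}|\log|x-\zeta||$, but this is not obviously bounded by $\frac14$ in general. The cleaner route — and I suspect the one intended — is to note that $G_\Om(x,\cdot)\ge0$, so $k_\Om(x,y)\ge\frac1{2\pi}\log|x-y|$, while an upper bound $k_\Om(x,y)\le\frac1{2\pi}\log(\text{something involving }\diam\Om)$ is less direct; alternatively one compares $k_\Om$ with the half-plane kernel using the interior disk condition. I would aim to show $0\le -G_\Om(x,y)+(\text{full-plane part})$-type inequalities force $|k_\Om(x,w)|$ on the relevant sphere to be at most $\frac1{4\pi}$, so that $\frac{2}{d_1(y)}\cdot\frac1{4\pi}=\frac1{2\pi d_1(y)}$, which is exactly the claimed second term.

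The main obstacle I anticipate is in part (1): correctly justifying that $r=|x-y|-d_1(x)$ (or whatever radius is chosen) gives a ball lying in $\Om\setminus\{y\}$ while simultaneously extracting the \emph{sharp} constants $2$ and $\pi$ and the precise algebraic combination $(|y-x_\mathrm{ext}|^2-\rhobar^2)(\rhobar+d_1(x))/\rhobar^2$ from the explicit exterior-disk Green's function. The comparison principle gives $0\le G_\Om(w,y)\le G_{\R^2\setminus\overline{B_\rhobar(x_\mathrm{ext})}}(w,y)$ for $w$ in the exterior domain, but one must check $\Om$ is indeed a subdomain of that exterior domain near $x$ (which is where $d_1(x)<\rhobar$ and the exterior disk condition enter), then evaluate the explicit kernel on $\partial B_r(x)$ and bound each factor geometrically. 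The $y$-gradient bound in part (2) is comparatively routine once the right splitting and the right comparison domain (half-plane via interior disk, or a ball containing $\Om$) are identified; the only subtlety is getting the constant $\frac1{2\pi}$ rather than something larger, which is why pinning down the correct bound on $k_\Om$ on the small sphere is the delicate point there.
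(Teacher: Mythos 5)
Your overall strategy for part (1) --- comparison with the explicit Green's function of the exterior of the disk $B_\rhobar(x_\mathrm{ext})$, followed by the interior gradient estimate \eqref{eq:gradient_bound} on a ball about $x$ --- is the paper's, but your choice of radius is wrong and the argument does not close as written. The largest ball centred at $x$ contained in $\Om\setminus\{y\}$ has radius $d_1(x)$, not $|x-y|-d_1(x)$: in the regime where the lemma is actually used one has $d_1(x)\ll|x-y|$, so $B_{|x-y|-d_1(x)}(x)$ sticks far outside $\Om$ and \eqref{eq:gradient_bound} cannot be applied on it. The correct choice is $r=d_1(x)$; the hypothesis $d_1(x)<|x-y|$ then keeps $y$ out of the ball, so $G_\Om(\cdot,y)$ is harmonic there. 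The constants also arise differently from what you describe: the factor $(|x-y|-d_1(x))^2$ in the denominator is \emph{not} produced by the $2/r$ prefactor but by the lower bound $|z-y|\ge|x-y|-d_1(x)$ for $z\in\partial B_{d_1(x)}(x)$ inserted into the explicit formula $G_{B_\rhobar(x_\mathrm{ext})^c}(z,y)=\frac1{4\pi}\log\bigl(1+\frac{[|y-x_\mathrm{ext}|^2-\rhobar^2][2d_1(z)\rhobar+d_1(z)^2]}{\rhobar^2|z-y|^2}\bigr)$; after $\log(1+r)\le r$ and $d_1(z)\le 2d_1(x)$, the $2/d_1(x)$ prefactor cancels against the numerator and yields exactly \eqref{eq:dxG_bound}.

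For part (2) your mechanism fails, as you half-suspect: $|k_\Om(x,\cdot)|$ is not bounded by $\frac1{4\pi}$ on $\partial B_{d_1(y)}(y)$ in general (it behaves like $\frac1{2\pi}|\log\dist(\cdot,\partial\Om)|$ near the boundary and also depends on $\diam\Om$), so applying \eqref{eq:gradient_bound} to $k_\Om$ itself cannot produce the clean constant $\frac1{2\pi d_1(y)}$. The missing idea is to differentiate first and exploit harmonicity in the \emph{other} variable: for fixed $y$, the vector field $x\mapsto\nabla_y k_\Om(x,y)$ is harmonic in $x$ on $\Om$, and its boundary values are explicit, namely $\nabla_y k_\Om(s,y)=\frac{y-s}{2\pi|y-s|^2}$ for $s\in\partial\Om$, because $k_\Om(\cdot,y)=\frac1{2\pi}\log|\cdot-y|$ on $\partial\Om$. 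The maximum principle (applied componentwise, or to the subharmonic modulus) then gives $|\nabla_y k_\Om(x,y)|\le\sup_{s\in\partial\Om}\frac1{2\pi|s-y|}\le\frac1{2\pi d_1(y)}$, which added to the gradient of the logarithmic part gives \eqref{dyG_bound}.
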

\begin{proof}
Let $\Om\subset\Om'$ and $y\in\Om$; then by the comparison principle, we have the
upper and lower estimates
\begin{equation*}
  G_{\Om'}(x,y) \geq G_\Om(x,y) \geq 0\quad\text{for all }x\in\Om.
\end{equation*}
To prove assertion (1), we note that if $d_1(x)<\rhobar$, there exists a unique point
$s\in\partial\Om$ such that $|x-s|=d_1(x)$, and $d_1(x)$ is a $\CC^2$ function
(see Lemma~14.16 of \cite{GT}, or \cite{KP81}) on this neighbourhood of $\partial\Om$.
Moreover,
\begin{equation*}
  \nabla d_1(x) = -\nu(s),
\end{equation*}
where $\nu(s)$ is the outward--pointing unit normal to
$\partial\Om$ at $s$. Since $\Om$ satisfies an exterior disk condition, there exists
$x_\mathrm{ext}\in\Om^c$ such that 
$B_\rhobar(x_\mathrm{ext})\subset\Om^c$ and
$s\in\partial B_\rhobar(x_\mathrm{ext})\cap \partial\Om$.
Using the explicit expression for the Green's function on
$B_\rhobar(x_\mathrm{ext})^c$ derived in Section \ref{sec:disk_funcs},
we therefore deduce that
\begin{equation*}
  0\leq G_\Om(x,y)\leq G_{B_\rhobar(x_\mathrm{ext})^c}(x,y)
  =\frac{1}{4\pi}\log\bg(1+\frac{\b[|y-x_\mathrm{ext}|^2-\rhobar^2\b]
    \b[2d_1(x)\rhobar+d_1(x)^2\b]}{\rhobar^2|x-y|^2}\bg).
\end{equation*}
Now, since $d_1(x)\leq |x-y|$ by assumption, $G_\Om$ is harmonic in $B_{d_1(x)}(x)$, and
we use \eqref{eq:gradient_bound} on $\Om$ with $f=0$ followed by the elementary inequality
$\log(1+r)\leq r$ for any $r\geq 0$ to deduce that
\begin{align*}
  \b|\nabla_xG_\Om(x,y)\b| &\leq \frac{2}{d_1(x)}
  \sup_{z\in\partial B_{d_1(x)}(x)}\b|G_\Om(z,y)\b|\\
  &\leq\frac{1}{2\pi d_1(x)}\sup_{z\in\partial B_{d_1(x)}(x)}\log\bg(1+\frac{\b[|y-x_\mathrm{ext}|^2
    -\rhobar^2\b]\b[2d_1(z)\rhobar+d_1(z)^2\b]}{\rhobar^2|z-y|^2}\bg),\\
  &\leq \frac{2\b(|y-x_\mathrm{ext}|^2
    -\rhobar^2\b)\b(\rhobar+d_1(x)\b)}{\pi\rhobar^2\b(|x-y|-d_1(x)\b)^2}.
\end{align*}
To obtain the final line, we estimate $d_1(z)\leq 2d_1(x)$ in the numerator and $|z-y|\geq |x-y|-d_1(x)$ in the
denominator.

Turning to assertion (2), we note that for fixed $y\in\Om$,
$\nabla_y G_\Om(\cdot,y)$ satisfies
\begin{equation*}
  \nabla_y G_\Om(x,y) = \frac{x-y}{2\pi|x-y|^2}+\nabla_y k_\Om(x,y),
\end{equation*}
where, with the Laplacian acting on each coordinate, we have
\begin{equation*}
  -\Delta_x\nabla_y k_\Om(x,y) = 0\text{ in }\Om\quad\text{and}\quad
  \nabla_y k_\Om(x,y) = \frac{y-x}{2\pi|y-x|^2}\text{ on }\partial\Om.
\end{equation*}
Applying the maximum principle, we therefore find that
\begin{equation*}
  \b|\nabla_y G_\Om(x,y)\b| \leq \frac{1}{2\pi|x-y|}
  +\sup_{s\in\partial\Om} \b|\nabla_y k_\Om(s,y)\b| \leq
  \frac{1}{2\pi|x-y|}+\frac{1}{2\pi d_1(y)},
\end{equation*}
as required.
\end{proof}
\begin{remark}
Notice that estimate \eqref{eq:dxG_bound} deteriorates if the points $x$ and $y$ become too close.
We stress here that we will use \eqref{eq:dxG_bound} only in the case $d_1(x)\ll|x-y|$.
\end{remark}

\subsection{Estimates on $\nabla h_\Om$}
We now provide estimates for the function $\nabla h_\Omega(x)$ both in the case when $x$ close to and far from the boundary.
We start with the latter.
\begin{lemma}\label{esthfar}
Let $x\in\Omega$ and define $\lo:=|\log(\diam\Om/2)|$.
Then
\begin{equation}\label{estnablah}
|\nabla h_\Om(x)|\leq\frac{2\max\{-\log d_1(x),\lo\}}{\pi d_1(x)}.
\end{equation}
\end{lemma}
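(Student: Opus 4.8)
The plan is to deduce \eqref{estnablah} from the interior gradient estimate \eqref{eq:gradient_bound} applied to $u=h_\Om$, which by \eqref{eq:h_Om_problem} solves $-\Delta h_\Om=\frac2\pi\mathrm{e}^{-4\pi h_\Om}$ in $\Om$. Since $h_\Om(z)\to-\infty$ as $z\to\partial\Om$, one cannot centre \eqref{eq:gradient_bound} on the ball $B_{d_1(x)}(x)$ itself; instead I would use $B_r(x)$ with $r=\theta\,d_1(x)$ for a fixed $\theta\in(0,1)$, so that $\overline{B_r(x)}\subset\Om$ and $d_1(z)\ge(1-\theta)d_1(x)$ for all $z\in\overline{B_r(x)}$, and optimise over $\theta$ at the end. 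This reduces matters to controlling $\sup_{\partial B_r(x)}|h_\Om|$ (for the first term of \eqref{eq:gradient_bound}) and $\sup_{B_r(x)}\mathrm{e}^{-4\pi h_\Om}$ (for the second).

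Both follow from the comparison principle, exactly as in the proof of Lemma~\ref{th:G_derivative_bounds} (so $h_\Om\le h_{\Om'}$ whenever $\Om\subseteq\Om'$), combined with the explicit Green's and Robin functions of disks collected in Section~\ref{sec:disk_funcs}, which give $h_{B_\rho(c)}(z)=\frac1{2\pi}\log\frac{\rho^2-|z-c|^2}{\rho}$. Since $B_{d_1(z)}(z)\subseteq\Om$, taking $\Om'=B_{d_1(z)}(z)$ yields $h_\Om(z)\ge\frac1{2\pi}\log d_1(z)$, hence $\mathrm{e}^{-4\pi h_\Om(z)}\le d_1(z)^{-2}$; on the other hand, comparing with a circumscribed disk yields an upper bound $h_\Om(z)\le\frac1{2\pi}\log(\diam\Om/2)$. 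Using in addition the elementary fact $d_1(z)\le\diam\Om/2$, the two bounds on $h_\Om$ combine into $|h_\Om(z)|\le\frac1{2\pi}\max\{-\log d_1(z),\lo\}$. Plugging these into \eqref{eq:gradient_bound} on $B_{\theta d_1(x)}(x)$ and using $d_1(z)\ge(1-\theta)d_1(x)$ throughout gives an inequality of the form
\begin{equation*}
  |\nabla h_\Om(x)|\le\frac1{\pi d_1(x)}\left(\frac1\theta\max\{-\log((1-\theta)d_1(x)),\lo\}+\frac{\theta}{(1-\theta)^2}\right),
\end{equation*}
and an appropriate choice of $\theta$ produces \eqref{estnablah}.

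It is also worth recording a complementary, cleaner bound that already handles most configurations. Since $h_\Om(x)=k_\Om(x,x)$ with $k_\Om$ symmetric, one has $\nabla h_\Om(x)=2\nabla_2 k_\Om(x,x)$, and $w\mapsto\nabla_2 k_\Om(w,x)$ is harmonic on $\Om$ and equals $\frac{x-w}{2\pi|x-w|^2}$ on $\partial\Om$ — precisely the situation arising in the proof of assertion~(2) of Lemma~\ref{th:G_derivative_bounds}. The maximum principle applied to its squared modulus then gives $|\nabla h_\Om(x)|\le(\pi d_1(x))^{-1}$, which already implies \eqref{estnablah} as soon as $\max\{-\log d_1(x),\lo\}\ge\frac12$. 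Only the regime in which $x$ lies well inside a domain whose diameter is close to $2$ (where $\Om$ is forced to be close to a disk centred near $x$) escapes this shortcut, and there one must rely on the argument above, possibly supplemented by the finer properties of $h_\Om$ recalled from \cite{CF85,Friedman,G90}.

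The principal obstacle is twofold. The first part is the upper bound $h_\Om(z)\le\frac1{2\pi}\log(\diam\Om/2)$ with the sharp constant $\diam\Om/2$ rather than the $\diam\Om$ coming from a crude circumscribed disk; this should be obtained by exploiting the interior disk condition — comparison with the osculating interior disk of radius $\rhobar$ gives $h_\Om(z)\le\frac1{2\pi}\log(2d_1(z))$ near $\partial\Om$ — together with compactness in the interior region. The second part is the bookkeeping in \eqref{eq:gradient_bound}: the boundary term degenerates as $\theta\to0$ and the source term as $\theta\to1$, so the choice of $\theta$ must be made carefully in order to collapse the two contributions into the single constant $2/\pi$ of \eqref{estnablah}; a careless estimate leaves an additive $O(1)$ error that the clean statement cannot absorb.
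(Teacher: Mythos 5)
Your proposal does not close. The route you develop in detail --- applying \eqref{eq:gradient_bound} to $u=h_\Om$ itself, with source $f=\smfrac2\pi\mathrm{e}^{-4\pi h_\Om}$ --- is not the paper's argument and, as you yourself concede, cannot deliver the stated constant: since $h_\Om\to-\infty$ at $\partial\Om$ you are forced to shrink the ball to $B_{\theta d_1(x)}(x)$, and the resulting bound $\frac1{\pi d_1(x)}\bigl(\frac1\theta\max\{\cdots\}+\frac{\theta}{(1-\theta)^2}\bigr)$ retains an additive term of order $1/d_1(x)$ for every choice of $\theta$. Your fallback bound $|\nabla h_\Om(x)|\le(\pi d_1(x))^{-1}$, obtained from the maximum principle applied to the harmonic field $w\mapsto\nabla_2k_\Om(w,x)$, is correct (it is exactly the computation inside the proof of assertion (2) of Lemma~\ref{th:G_derivative_bounds}, specialised to $y=x$), but it only implies \eqref{estnablah} when $\max\{-\log d_1(x),\lo\}\ge\frac12$; in the complementary regime (for instance $x$ near the centre of a domain of diameter close to $2$, where the right-hand side of \eqref{estnablah} degenerates) you explicitly defer to ``finer properties of $h_\Om$'' without supplying them. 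That is a genuine gap, and it sits precisely where the statement is most delicate.

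The paper's proof avoids both difficulties by differentiating $k_\Om$ in the \emph{first} variable: by symmetry $\nabla h_\Om(x)=2\nabla_xk_\Om(x,y)|_{y=x}$, and for fixed $y$ the map $x\mapsto k_\Om(x,y)$ is \emph{harmonic} and continuous up to $\overline{\Om}$ --- unlike $h_\Om$, it does not blow up at $\partial\Om$. One may therefore apply \eqref{eq:gradient_bound} on the full ball $B_{d_1(x)}(x)$ with $f=0$ (no shrinkage, no source term), and then control $\sup_{\partial B_{d_1(x)}(x)}|k_\Om(\cdot,y)|$ by $\sup_{s\in\partial\Om}|k_\Om(s,y)|=\frac1{2\pi}\sup_{s\in\partial\Om}\bigl|\log|s-x|\bigr|$ via the maximum principle; this last supremum is what produces the factor $\max\{-\log d_1(x),\lo\}$ in \eqref{estnablah}. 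If you replace your first route by this two-line argument the proof is complete; your intermediate estimate $(\pi d_1(x))^{-1}$ is nonetheless worth recording, since it is sharper than \eqref{estnablah} whenever $\max\{-\log d_1(x),\lo\}>\frac12$.
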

\begin{proof}
Observe that by the symmetry of $k_\Om$, we can write
$$\nabla_x h_\Om(x)=\nabla_x k_\Om(x,x)=2\nabla_x k_\Om(x,y)|_{y=x}\,.$$
Since $k_\Om$ solves \eqref{eq:k_Om_problem}, by using \eqref{eq:gradient_bound} with $r=d_1(x)$ and the maximum principle, we obtain
\begin{equation*}
\begin{split}
|\nabla_x k_\Om(x,y)||_{y=x}\leq & \frac{2}{d_1(x)}\sup_{s\in\partial B_r(x)}|k_\Om(s,x)|\leq  \frac{2}{d_1(x)}\sup_{s\in\partial\Om}|k_\Om(s,x)| \\
= &  \frac{1}{\pi d_1(x)}\sup_{s\in\partial\Omega}\b|\log|s-x|\b|\leq \frac{\max\{-\log d_1(x),|\log(\diam\Omega/2)|\}}{\pi d_1(x)},
\end{split}
\end{equation*}
from which the thesis follows.
\end{proof}

As we will see shortly, the interaction between a dislocation and the boundary
can be expressed using the function $h_\Om$: we therefore prove the
following asymptotic description of $\nabla h_\Om$ near the boundary, following
the method of \cite{CF85}. 
The result presented below is sharper than that obtained
in this previous work, since we obtain a uniform bound which depends on the geometry
of the domain: this extra detail will be important for our subsequent analysis of the dynamics of
\eqref{500}.

\begin{lemma}
  \label{th:h_derivative_bound}
Suppose that $\Om$ is $\CC^2$ and satisfies interior and exterior
disk conditions with radius $\rhobar$. 
Then, for any $\sigma\in(0,1)$, 
if $d_1(x)\leq \sigma\rhobar$, there exists a constant $C_\sigma>0$ (depending only on $\sigma$) such that
\begin{equation}
  \bg|\nabla h_\Om(x)+\frac{\nu(s)}{2\pi d_1(x)}\bg|\leq
  \frac{C_\sigma}{\pi\rhobar},\label{eq:dh_bound}
\end{equation}
where $s\in\partial\Omega$ is the point which realises the distance to the boundary.
\end{lemma}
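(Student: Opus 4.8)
The plan is to compare $h_\Om$ near the boundary with the corresponding function for the half-plane, for which an explicit formula is available, using the interior and exterior disk conditions to sandwich $h_\Om$ between the half-plane functions associated with the two osculating disks at $s$. Recall that for a half-plane $H$ at (signed) distance $t$ from the boundary one has $h_H(x) = -\frac{1}{2\pi}\log(2t)$, so that $\nabla h_H(x) = -\frac{\nu}{2\pi t}$; this is exactly the leading term appearing in \eqref{eq:dh_bound}. The interior disk $B_\rhobar(x_\mathrm{int})$ and exterior disk $B_\rhobar(x_\mathrm{ext})$ at $s$ both have explicit $h$-functions (computed from the Green's functions collected in Appendix \ref{appendix}), and since $B_\rhobar(x_\mathrm{int})\subseteq\Om$ and $B_\rhobar(x_\mathrm{ext})^c\supseteq\Om$ — wait, more precisely $\Om\subseteq B_\rhobar(x_\mathrm{ext})^c$ — the monotonicity of $k_\Om$ (hence $h_\Om$) under domain inclusion gives
\[
  h_{B_\rhobar(x_\mathrm{int})}(x)\ \le\ h_\Om(x)\ \le\ h_{B_\rhobar(x_\mathrm{ext})^c}(x).
\]

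First I would write $h_\Om(x) = k_\Om(x,x)$ and, following \cite{CF85}, use the representation $\nabla h_\Om(x) = 2\nabla_x k_\Om(x,y)|_{y=x}$. The key point is that $k_\Om(\cdot,y)$ is harmonic, so I can apply the interior gradient estimate \eqref{eq:gradient_bound} on a ball $B_r(x)$ with $r$ a fixed fraction of $d_1(x)$, say $r = (1-\sigma')d_1(x)$ for a suitable $\sigma'$, reducing the problem to controlling $\sup_{\partial B_r(x)} |k_\Om(z,y) - k_{\text{model}}(z,y)|$ where $k_{\text{model}}$ is the half-plane kernel. On the boundary $\partial\Om$ one has $k_\Om(z,y) = \frac{1}{2\pi}\log|z-y|$, and the boundary values of the model kernel differ from this by a quantity controlled by the curvature, i.e. by $\rhobar^{-1}$ times $d_1$; the maximum principle then propagates this bound inward. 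Dividing by $r \sim d_1(x)$ produces the $O(\rhobar^{-1})$ error in \eqref{eq:dh_bound}, with the constant $C_\sigma$ blowing up as $\sigma \to 1$ because the ball $B_r(x)$ must stay inside $\Om$ and away from $\partial\Om$.

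Concretely I would carry out the steps in this order: (i) set up the comparison $h_{B_\rhobar(x_\mathrm{int})} \le h_\Om \le h_{B_\rhobar(x_\mathrm{ext})^c}$ and record the explicit expressions for the two bounding functions from Appendix \ref{appendix}, expanding each as $-\frac{1}{2\pi}\log(2d_1(x)) + O(d_1(x)/\rhobar^2)$ when $d_1(x) \le \sigma\rhobar$; (ii) differentiate — since the bounds are ordered and the extremal configuration is radial, the gradient of $h_\Om$ in the normal direction $\nu(s)$ is squeezed between the normal derivatives of the two explicit functions, each of which equals $-\frac{1}{2\pi d_1(x)} + O(\rhobar^{-1})$; (iii) for the tangential component of $\nabla h_\Om$, which has no leading term, use the harmonicity of $k_\Om$ together with \eqref{eq:gradient_bound} and the maximum principle as sketched above, comparing with the half-plane model to show it is $O(\rhobar^{-1})$; (iv) combine the normal and tangential estimates to obtain \eqref{eq:dh_bound} and track the dependence of the constant on $\sigma$.

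The main obstacle I anticipate is step (iii), or more precisely making the comparison with the model kernel quantitative and uniform in the geometric data: one must show that the difference between $k_\Om$ and the half-plane kernel $k_H$ — or between $k_\Om$ and the osculating-disk kernel — has boundary values on $\partial\Om$ that are $O(d_1/\rhobar)$ near $s$ and merely bounded elsewhere, and then argue (via the maximum principle or a barrier built from the disk conditions) that this estimate survives on $\partial B_r(x)$ with the right power of $d_1$. The $\CC^2$ regularity and the uniform two-sided disk condition are exactly what is needed to control how fast $\partial\Om$ peels away from its tangent line near $s$, and getting the constant to depend only on $\sigma$ (and not, say, on $\diam\Om$) will require care in how the far-away part of $\partial\Om$ is handled — presumably absorbing it using that $d_1(x) \le \sigma\rhobar$ keeps $x$ uniformly far from the rest of the boundary on the scale $\rhobar$.
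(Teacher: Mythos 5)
Your step (i) --- sandwiching $h_\Om$ between $h_{B_\rhobar(x_\mathrm{int})}$ and $h_{B_\rhobar(x_\mathrm{ext})^c}$ via the comparison principle --- is exactly how the paper's proof begins, and it yields the sup-norm estimate $\big|h_\Om(x)-\tfrac{1}{2\pi}\log(2d_1(x))\big|\leq \tfrac{\log 2}{2\pi}\,d_1(x)/\rhobar$. The gap lies in how you pass from this zeroth-order bound to a gradient bound. Step (ii) as written is not valid: a pointwise ordering $f\le g\le h$ carries no information about derivatives unless the functions touch, and $h_\Om$ does not coincide with either bounding disk function at $x$, so the normal derivative of $h_\Om$ is not ``squeezed'' between the normal derivatives of the explicit expressions. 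Your fallback route (the discussion around step (iii)), comparing the harmonic function $k_\Om(\cdot,y)$ with a half-plane or osculating-disk kernel, runs into precisely the obstacle you flag and do not resolve: the boundary data of the difference is $O(d_1/\rhobar)$ only near $s$ and merely $O(1)$ on the rest of $\partial\Om$, so the maximum principle alone gives an $O(1)$ sup bound, and dividing by $r\sim d_1(x)$ in \eqref{eq:gradient_bound} would produce $O(1/d_1(x))$ instead of $O(1/\rhobar)$. Repairing this would require a quantitative harmonic-measure (Poisson-kernel decay) estimate, which is a substantial missing ingredient.

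The device the paper uses, and which you are missing, is to apply the interior gradient estimate \eqref{eq:gradient_bound} directly to the difference $u:=h_\Om-\tfrac1{2\pi}\log(2d_1)$ on the ball $B_{d_1(x)}(x)$, feeding it two inputs: the sup bound $|u|\lesssim d_1/\rhobar$ from the sandwich (which controls $\tfrac{2}{r}\sup|u|$ by $O(1/\rhobar)$, since every point of the ball has $d_1\le 2d_1(x)$), and a bound $|\Delta u|\lesssim (\rhobar\, d_1(x))^{-1}$ obtained from the PDE $-\Delta h_\Om=\tfrac{2}{\pi}\mathrm{e}^{-4\pi h_\Om}$ in \eqref{eq:h_Om_problem} combined with the sandwich again (to evaluate $\mathrm{e}^{-4\pi h_\Om}$ up to a relative error $O(d_1/\rhobar)$) and the identities $|\nabla d_1|=1$ and $\Delta d_1=-\kappa/(1-\kappa d_1)$. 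The two singular terms of size $\tfrac{1}{2\pi d_1^2}$ cancel in $\Delta u$, leaving only curvature-sized remainders; this is exactly where the hypothesis $d_1(x)\le\sigma\rhobar$ and the bound $\|\kappa\|_\infty\le\rhobar^{-1}$ enter, and where the blow-up of $C_\sigma$ as $\sigma\to1$ originates. With this route no comparison of kernels, no handling of far-away boundary data, and no separate normal/tangential decomposition is needed. (A minor slip: for the half-plane one has $h_H(x)=+\tfrac{1}{2\pi}\log(2t)$, which is what gives $\nabla h_H=-\nu/(2\pi t)$; your stated formula has the wrong sign relative to the gradient you then quote.)
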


\begin{proof}
We recall that $h_\Om$ satisfies \eqref{eq:h_Om_problem}, and therefore
elliptic regularity theory implies that $h_\Om$ is smooth in $\Om$.
Moreover, by employing the maximum principle and the fact that
the right--hand side of \eqref{eq:h_Om_problem} is positive and
decreasing, we find that
\begin{equation}
  \Om\subseteq \Om' \quad\text{implies}\quad
  h_{\Om}\leq h_{\Om'}\quad\text{in }\Om.\label{eq:h_comparison}
\end{equation}
This fact will now allow us to construct estimates similar to
those found in Section~2 of \cite{CF85} by using explicit 
expressions for $h_\Om$ when $\Om$ is the interior or exterior
of a ball.

Recalling that $\Om$ satisfies \eqref{1011}, 
using the comparison principle \eqref{eq:h_comparison} and the
expressions for $h_{B_\rhobar(0)}$ and $h_{B_\rhobar(0)^c}$ derived in Section \ref{sec:disk_funcs}, it follows that
\begin{equation}
  \frac1{2\pi}\log\left(2 d_1(x)-\frac{d_1(x)^2}\rhobar\right)
  \leq h_\Om(x) \leq \frac1{2\pi}\log\left(2d_1(x)+\frac{d_1(x)^2}\rhobar\right)\quad
  \text{in }B_\rhobar(x_\mathrm{int}).\label{eq:hOm_bounds}
\end{equation}
Subtracting $\smfrac1{2\pi}\log|2d_1(x)|$, we obtain
\begin{equation}\label{911}
  \frac1{2\pi}\log\left(1-\frac{d_1(x)}{2\rhobar}\right)
  \leq h_\Om(x)-\frac1{2\pi}\log2 d_1(x) \leq 
  \frac1{2\pi}\log\left(1+\frac{d_1(x)}{2\rhobar}\right)\quad\text{in }
  B_\rhobar(x_\mathrm{int}),
\end{equation}
since $B_\rhobar(x_\mathrm{int})\subset\Omega\subset B_\rhobar(x_\mathrm{ext})^c$.
Notice that for any $\ell\in(0,1)$, we can estimate $|\log(1+t)|\leq|\log(1-\ell)||t|/\ell$, if $|t|\leq \ell$.
By applying this to \eqref{911} with $t=-d_1(x)/2\rhobar$ and $\ell=1/2$, it follows that 
\begin{equation}\label{eq:h_bound}
  \left|h_\Om(x)-\frac1{2\pi}\log2d_1(x)\right|\leq \frac{\log2}{2\pi}\frac{d_1(x)}\rhobar.
\end{equation}

Differentiating $h_\Om(x)-\frac1{2\pi}\log2 d_1(x)$, applying \eqref{eq:h_Om_problem} and the lower bound from \eqref{eq:hOm_bounds}, we find that
\begin{align*}
  -\Delta\bigg[h_\Om(x) & -\frac1{2\pi}\log2 d_1(x)\bigg] = 
  \frac2\pi\mathrm{e}^{-4\pi h_\Om(x)}
  +\frac1{2\pi}\Delta [\log2d_1(x)]\\
    &\leq\frac{1}{2\pi d_1^{2}(x)}\left(1-\frac{d_1(x)}{2\rhobar}\right)^{-2}
      +\frac{1}{2\pi}\left(\frac{\Delta d_1(x)}{d_1(x)}
      -\frac{|\nabla d_1(x)|^2}{d_1^{2}(x)}\right) \\
      &=\frac{1}{2\pi d_1^{2}(x)}\left[\left(\frac{2\rhobar}{2\rhobar-d_1(x)}\right)^2+d_1(x)\Delta d_1(x)-|\nabla d_1(x)|^2\right].
\end{align*}
Recalling that, when $d_1(x)\leq\rhobar$, $\nabla d_1(x) = -\nu(s)$,
where $\nu(s)$ is the outward--pointing unit normal at the boundary point $s$ which
is closest to $x$, we have $|\nabla d_1(x)|=1$.
Therefore,
the estimate above reads
\begin{equation}\label{912} 
-\Delta\left[h_\Om(x)-\frac1{2\pi}\log2 d_1(x)\right]\leq \frac{\Delta d_1(x)}{2\pi d_1(x)}+\frac1{2\pi d_1(x)}\frac{4\rhobar-d_1(x)}{(2\rhobar-d_1(x))^2}.
\end{equation}
We now estimate the two summands in the right-hand side above separately.
By recalling \cite[Lemma 14.17]{GT}, we have
$$\Delta d_1(x) = \frac{-\kappa(s)}{1-\kappa(s) d_1(x)},$$
where $\kappa(s)$ is the curvature at $s\in\partial\Om$ which realises $|s-x|=d_1(x)$; recalling that $d_1(x)\leq \sigma\rhobar$, we can estimate
$$\bg|\frac{\kappa(s)}{1-\kappa(s) d_1(x)}\bg|\leq \frac{1}{(1-\sigma)\rhobar}.$$
Noting that the map $[0,\sigma\rhobar]\ni t\mapsto (4\rhobar-t)/(2\rhobar-t)^2$ is increasing and it attains its maximum when $t=\sigma\rhobar$, \eqref{912} reads
\begin{equation}\label{913}
-\Delta\left[h_\Om(x)-\frac1{2\pi}\log2 d_1(x)\right]\leq \frac{1}{2\pi\rhobar d_1(x)}\frac{2\sigma^2-9\sigma+8}{(1-\sigma)(2-\sigma)^2}=:\frac{c_\sigma}{2\pi\rhobar d_1(x)}.
\end{equation}
Applying \eqref{eq:gradient_bound} on a ball centred at $x$ of radius $r=d_1(x)$, taking \eqref{eq:h_bound} and \eqref{913} into account, we obtain
\begin{equation*}
  \left|\nabla\left[h_\Om(x)-\frac1{2\pi}\log|2d_1(x)|\right]\right| \leq
  \frac{\log2}{\pi\rhobar}+\frac{c_\sigma}{4\pi\rhobar},
\end{equation*}
which is the thesis \eqref{eq:dh_bound} with 
\begin{equation}\label{Csigma}
C_\sigma:=\log2+\frac{c_\sigma}4=\log2+\frac{2\sigma^2-9\sigma+8}{4(1-\sigma)(2-\sigma)^2}.
\end{equation}
The lemma is proved.
\end{proof}

\section{Main results}\label{sect:attraction}
In this section, we prove our main results.
We will apply Lemma \ref{th:h_derivative_bound} first to study the Peach--Koehler force on a dislocation very close to the boundary, and then to obtain criteria on the initial conditions of the evolution such that dislocations hit the boundary or collide with each other within a given time interval.

\smallskip

The situation we consider in the next two subsections is the following: we suppose that we have $n\in\N$ dislocations in $\Omega$ one of which, $z_1$, is much closer to the boundary $\partial\Omega$ than the others; we also suppose that the other $n-1$ dislocations, $z_2,\ldots,z_n$, are spaced sufficiently far apart from each other and from the boundary.

We introduce the notation $z':=(z_2,\ldots,z_n)$ so that
the configuration of the $n$ dislocations can be represented by the vector $z:=(z_1,z')\in\Omega^n$.
Given $0<\delta<\gamma<\diam\Omega/2$, define the set
\begin{equation}\label{eq:Geomofz}
  \Dom_{n,\delta,\gamma}:=\b\{(z_1,z')\in\Om^n \bsep d_1(z_1)<\delta,d_{n-1}(z')>\gamma\b\}.
\end{equation}
The geometric meaning of the set $ \Dom_{n,\delta,\gamma}$ defined above is the following: if $z\in \Dom_{n,\delta,\gamma}$, it means that $z_1$ lies at a distance of at most $\delta$ from the boundary, while all the other dislocations $z_2,\ldots,z_n$ lie at a distance of at least $\gamma$ away from the boundary and their mutual distance is also at least $\gamma$.
The condition $\delta<\gamma$ ensured that $z_1$ is closer to the boundary than any other dislocation.

\subsection{Free boundaries attract dislocations}
In the following theorem we show that the Peach--Koehler force acting on a dislocation which is very close to the boundary is directed along the outward unit normal at the boundary point closest to the dislocation.
\begin{theorem}\label{thm:fatal}
Let $n\in\N$, let $\sigma\in(0,1)$, recall the definition of $\rhobar$ from \eqref{1011}, and let $\delta\in(0,\sigma\rhobar)$ and $\gamma\in(\max\{2\delta,\rhobar\},\diam\Omega/2)$.
Let $z=(z_1,z')\in\Dom_{n,\delta,\gamma}$.
Then, if $s\in\partial\Omega$ is the boundary point closest to $z_1$, the Peach-Koehler force $f_1(z)$ on the dislocation $z_1$ (see \eqref{502}) satisfies
\begin{equation}\label{505}
f_1(z)=\frac{\nu(s)}{4\pi d_1(z_1)}+O(1),
\end{equation}
where $O(1)$, which is quantified in \eqref{estfatal}, denotes a term which is uniformly bounded for all $z\in\Dom_{n,\delta,\gamma}$.
\end{theorem}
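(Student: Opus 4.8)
The plan is to compute $f_1(z) = -\nabla_{z_1}\E_n(z_1,z')$ directly from the expression \eqref{renen}, separating the terms that involve $z_1$ from those that do not. Differentiating \eqref{renen} with respect to $z_1$, only two groups of terms survive: the two-body terms $\sum_{j\geq 2} b_1 b_j\big(k_\Om(z_1,z_j) - \frac1{2\pi}\log|z_1-z_j|\big)$ and the self-interaction term $\frac12 b_1^2 h_\Om(z_1)$. Since $b_1^2 = 1$, the self-interaction contributes $-\frac12\nabla h_\Om(z_1)$. By Lemma~\ref{th:h_derivative_bound}, since $d_1(z_1) < \delta < \sigma\rhobar$, this equals $\frac{\nu(s)}{4\pi d_1(z_1)} + O(1)$, with the error controlled by $C_\sigma/(2\pi\rhobar)$. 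This already produces the leading term in \eqref{505}; the rest of the proof is to show that the two-body terms are $O(1)$, i.e. uniformly bounded on $\Dom_{n,\delta,\gamma}$.

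First I would handle the logarithmic two-body terms: $\nabla_{z_1}\log|z_1-z_j| = (z_1-z_j)/|z_1-z_j|^2$, which has modulus $1/|z_1-z_j|$. Since $z\in\Dom_{n,\delta,\gamma}$ forces $z_1$ within $\delta$ of the boundary while each $z_j$ ($j\geq 2$) is at distance $>\gamma$ from the boundary, the triangle inequality gives $|z_1 - z_j| \geq \gamma - \delta > \gamma/2$ (using $\gamma > 2\delta$). Hence each such term is bounded by $\frac{1}{2\pi}\cdot\frac{2}{\gamma} = \frac{1}{\pi\gamma}$, and summing over $j=2,\ldots,n$ gives a bound $\frac{n-1}{\pi\gamma}$. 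Next I would bound $\nabla_{z_1} k_\Om(z_1,z_j)$. Here I would invoke the estimate \eqref{eq:dxG_bound} from Lemma~\ref{th:G_derivative_bounds} applied to $G_\Om(x,y) = -\frac1{2\pi}\log|x-y| + k_\Om(x,y)$ with $x = z_1$, $y = z_j$: we have $d_1(z_1) < \delta < \rhobar$ and $d_1(z_1) < \gamma/2 < |z_1 - z_j|$, so the hypotheses are met, and combining the bound on $\nabla_x G_\Om$ with the already-controlled $\nabla_x \log|x-y|$ term gives a bound on $\nabla_{z_1} k_\Om(z_1,z_j)$ in terms of $\rhobar$, $d_1(z_1)$, $\gamma$, and $|z_j - x_\mathrm{ext}|$. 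The last quantity is controlled by $\diam\Om$ (both points lie within a bounded neighbourhood of $\overline\Om$).

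Collecting everything, $f_1(z) = -\frac12\nabla h_\Om(z_1) - \sum_{j\geq 2} b_1 b_j \nabla_{z_1}\big(k_\Om(z_1,z_j) - \frac1{2\pi}\log|z_1-z_j|\big) = \frac{\nu(s)}{4\pi d_1(z_1)} + O(1)$, where the $O(1)$ term aggregates $C_\sigma/(2\pi\rhobar)$ from Lemma~\ref{th:h_derivative_bound}, the $(n-1)/(\pi\gamma)$ from the logarithmic terms, and the $k_\Om$ contributions; all of these are uniform over $\Dom_{n,\delta,\gamma}$ once $n$, $\sigma$, $\delta$, $\gamma$, $\rhobar$, $\diam\Om$ are fixed. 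I would then write out the explicit aggregate bound to populate the reference \eqref{estfatal}.

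I expect the main obstacle to be the clean bookkeeping of the $\nabla_{z_1} k_\Om(z_1,z_j)$ terms: one must be slightly careful that \eqref{eq:dxG_bound} degenerates as $d_1(z_1)\to 0$ only through the factor $(\rhobar + d_1(z_1))/(|x-y|-d_1(z_1))^2$, which in fact stays bounded (indeed tends to $\rhobar/|z_1-z_j|^2$) in our regime $d_1(z_1)\ll|z_1-z_j|$, so there is no hidden blow-up — but making this uniform over all admissible configurations, and in particular getting a bound on $|z_j - x_\mathrm{ext}|$ that does not depend on where on $\partial\Om$ the point $s$ sits, requires invoking that $\Om$ is bounded with $\diam\Om < \infty$ and that $x_\mathrm{ext}$ lies within $\rhobar$ of $\partial\Om$. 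Everything else is a routine triangle-inequality estimate once Lemmas~\ref{th:G_derivative_bounds} and \ref{th:h_derivative_bound} are in hand.
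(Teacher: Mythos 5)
Your proposal is correct and follows essentially the same route as the paper: split off the self-interaction term, apply Lemma~\ref{th:h_derivative_bound} to get the leading $\nu(s)/(4\pi d_1(z_1))$ contribution with error $C_\sigma/(2\pi\rhobar)$, and control the interaction with the remaining dislocations via Lemma~\ref{th:G_derivative_bounds}(1) together with the triangle inequality $|z_1-z_j|>\gamma-\delta$. The only cosmetic difference is that you split $G_\Om=k_\Om-\frac1{2\pi}\log|\cdot|$ and recombine (and bound $|z_j-x_{\mathrm{ext}}|$ by $\diam\Om+\rhobar$ rather than the paper's sharper $|z_1-z_j|+\delta+\rhobar$), which changes only the explicit constant in \eqref{estfatal}, not the argument.
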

\begin{proof}
Since $z\in\Dom_{n,\delta,\gamma}$, $d_1(z_1)<\rhobar$, by the assumptions on $\partial\Omega$, there exists a unique point $s\in\partial\Omega$ such that $d_1(z_1)=|z_1-s|$.

Recalling \eqref{eq:Green}, we express the renormalised energy \eqref{renen} as
\begin{equation}\label{eq:E_1rest}
\E_n(z_1,\ldots,z_n) = \E_1(z_1)+\E_{n-1}(z')+\sum_{i=2}^nb_1b_i G_\Om(z_1,z_i),
\end{equation}
where we separate the contribution of $z_1$ and that of $z'$. The Peach-Koehler force \eqref{502} on $z_1$ can therefore be written as
\begin{equation}\label{506}
\begin{split}
f_1(z)=-\nabla_{z_1} \E_n(z)= & -\nabla_{z_1} \E_1(z_1)-\sum_{i=2}^n b_1b_i\nabla_{z_1}G_\Omega(z_1,z_i) \\
= & -\frac12\nabla_{z_1} h_\Omega(z_1)-\sum_{i=2}^n b_1b_i\nabla_{z_1}G_\Omega(z_1,z_i).
\end{split}
\end{equation}
To prove \eqref{505}, we estimate the difference
\begin{equation}\label{507}
\left| f_1(z)-\frac{\nu(s)}{4\pi d_1(z_1)}\right|\leq \left| f_1(z)+\frac12\nabla_{z_1} h_\Omega(z_1)\right|+\frac12\left|\nabla_{z_1} h_\Omega(z_1)+\frac{\nu(s)}{2\pi d_1(z_1)}\right|
\end{equation}
Invoking \eqref{506}, we use \eqref{eq:dxG_bound} to estimate the first term in the right-hand side above by 
\begin{equation*}
\left| f_1(z)+\frac12\nabla_{z_1} h_\Omega(z_1)\right|\leq
\sum_{i=2}^n \frac{2\b(|z_i-x_\mathrm{ext}|^2-\rhobar^2\b)\b(\rhobar+d_1(z_1)\b)}{\pi\rhobar^2\b(|z_1-z_i|-d_1(z_1)\b)^2}.
\end{equation*}
Recalling \eqref{eq:Geomofz} and setting $a_i:=|z_1-z_i|-d_1(z_1)$ for $i=2,\ldots,n$, we apply the triangle inequality to obtain $|z_i-z_1|\geq |z_i-s|-|s-z_1| > \gamma-\delta$, which entails that
\begin{subequations}\label{33}
  \begin{eqnarray}
a_i & \geq & \gamma-2\delta, \label{est3}\\
\qquad|z_i-x_\mathrm{ext}| & \leq & |z_i-z_1|+|z_1-x_{\mathrm{ext}}|\leq |z_i-z_1|+d_1(z_1)+\rhobar=a_i+2\delta+\rhobar; \label{est2}
\end{eqnarray}
\end{subequations}
see \eqref{tri}(b) for an illustration of the geometry.
Using \eqref{est2} we estimate 
\begin{equation}\label{510}
\begin{split}
|z_i-x_\mathrm{ext}|^2-\rhobar^2&\leq  (a_i+2\delta+\rhobar)^2-\rhobar^2 \\
&\leq a_i^2+2a_i(2\delta+\rhobar)+4\delta(\delta+\rhobar).
\end{split}
\end{equation}
Applying \eqref{510} to bound the numerator and the definition of $a_i$ in the denominator, and then
using \eqref{est3}, now gives
\begin{equation}\label{est4}
\begin{split}
  \sum_{i=2}^n & \frac{2\b(|z_i-x_\mathrm{ext}|^2-\rhobar^2\b)\b(\rhobar+d_1(z_1)\b)}{\pi\rhobar^2\b(|z_1-z_i|-d_1(z_1)\b)^2} \\
  & \leq\sum_{i=2}^n\frac{2(\rhobar+\delta)}{\pi\rhobar^2}
  \frac{a_i^2+2a_i(2\delta+\rhobar)+4\delta(\delta+\rhobar)}{a_i^2}\\
& \leq\frac{2(\rhobar+\delta)(n-1)}{\pi\rhobar^2}\!\!\left(1+2\frac{\rhobar+2\delta}{\gamma-2\delta}+4\frac{\delta(\rhobar+\delta)}{(\gamma-2\delta)^2}\right).
\end{split}
\end{equation}
Now, collecting terms in \eqref{est4}, then applying \eqref{eq:dh_bound} and the hypothesis that
$\delta<\sigma\rhobar$, estimate \eqref{507} becomes
\begin{equation}\label{estfatal}
\left| f_1(z)-\frac{\nu(s)}{4\pi d_1(z_1)}\right|\leq \frac{2(1+\sigma)(n-1)\gamma(\gamma+2\rhobar)}{\pi\rhobar(\gamma-2\sigma\rhobar)^2}+\frac{C_\sigma}{2\pi\rhobar}=:\frac{C_{n,\sigma}(\gamma)}{2\pi\rhobar},
\end{equation}
where the constant $C_{n,\sigma}(\gamma):=C_\sigma+4(1+\sigma)(n-1)\gamma(\gamma+2\rhobar)/(\gamma-2\sigma\rhobar)^2$ only depends on the geometric parameter $\rhobar$, on $\sigma\in(0,1)$, and on how far all the other dislocations are from $z_1$ and from $\partial\Omega$.
This proves \eqref{505}.
\end{proof}
\begin{figure}[h]
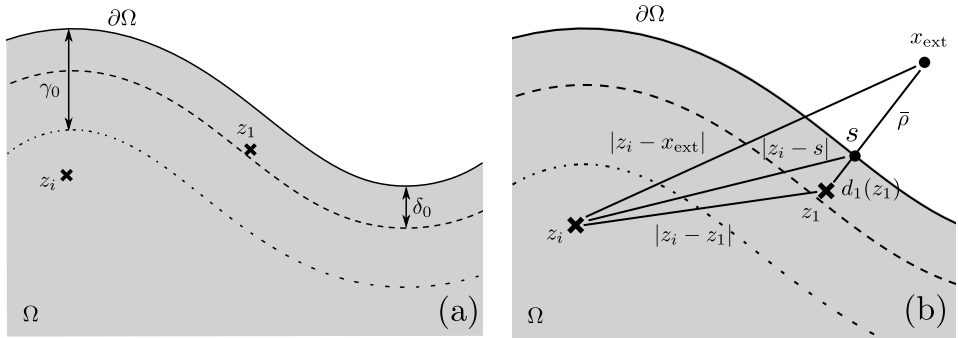

\begin{center}
\includegraphics[scale=.35]{geometric_params2.pdf}\quad
\includegraphics[scale=.47]{estimates.pdf}
\caption{(a) depiction of the geometric parameters $\gamma_0$ and $\delta_0$, with the dislocations $(z_1,\ldots,z_i,\ldots)$ belonging to the set $\Dom_{n,\delta_0,\gamma_0}$ defined in \eqref{eq:Geomofz}; (b) the elements involved in the estimate via triangle inequality for $|z_i-z_1|$ and \eqref{est2}.}
\label{tri}
\end{center}
\end{figure}

\subsection{Collision with the boundary}

We want to find conditions on the parameters $\delta$ and $\gamma$ in \eqref{eq:Geomofz}, in order to strengthen the constraint $\delta<\gamma$ in such a way that if the initial configuration of the system $z(0)\in\Dom_{n,\delta_0,\gamma_0}$, for some $\delta_0<\gamma_0$, then $z_1$ will collide with the boundary before any other collision event occurs.

\begin{theorem}\label{thm:boundary_collision}
Let $n\in\N$, let $\sigma\in(0,1)$, $\gamma_0>0$, and consider $\rhobar$ from \eqref{1011}.
There exist $\delta_0>0$ such that, if $z(0)\in\Dom_{n,\delta_0,\gamma_0}$, then there exists $T_{\mathrm{coll}}^{\partial\Om}>0$ such that the evolution $z(t)$ is defined for $t\in[0,T_{\mathrm{coll}}^{\partial\Om}]$, 
$z(t)\in\Omega^n$ for $t\in[0,T_{\mathrm{coll}}^{\partial\Om})$, and $z_1(T_{\mathrm{coll}}^{\partial\Om})\in\partial\Omega$ and $z'(T_{\mathrm{coll}}^{\partial\Om})\in\Omega^{n-1}$.
Furthermore, as $\delta_0\to0$, the following estimate holds
\begin{equation}\label{eq:ubht}
  T_{\mathrm{coll}}^{\partial\Om}\leq2\pi\delta_0^2+O(\delta_0^3).
  \end{equation}
\end{theorem}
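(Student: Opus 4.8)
The plan is to reduce the $n$-dislocation dynamics to an (approximate) scalar ODE for the distance $d_1(z_1(t))$ to the boundary, using Theorem~\ref{thm:fatal} to control the Peach--Koehler force, and then to run a continuation/trapping argument. First I would fix $\gamma_0$ and choose $\delta_0$ small (in particular $\delta_0<\sigma\rhobar$ and $2\delta_0<\min\{\gamma_0,\rhobar\}$, so that $z(0)\in\Dom_{n,\delta_0,\gamma_0}$ makes sense and Theorem~\ref{thm:fatal} applies at $t=0$). By local existence for the ODE system \eqref{500} (the right-hand side is smooth on the open set $\{d_n>0\}\subset\Om^n$, cf.\ \eqref{1001}), the evolution exists on a maximal interval $[0,T^*)$. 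Define
\[
  T_1:=\sup\{\,t\in[0,T^*) : z(s)\in\Dom_{n,\delta,\gamma_0/2}\text{ for all }s\in[0,t],\text{ with }d_1(z_1(s))<\delta_0\text{ decreasing}\,\},
\]
i.e.\ the first time the configuration leaves the ``good'' region, where $z_1$ is near the boundary and the others stay well separated. On $[0,T_1)$ Theorem~\ref{thm:fatal} gives $\dot z_1=f_1(z)=\nu(s(t))/(4\pi d_1(z_1))+O(1)$ with the $O(1)$ term bounded by $C_{n,\sigma}(\gamma_0/2)/(2\pi\rhobar)=:M$. Since $\nabla d_1(z_1)=-\nu(s)$ on this neighbourhood of $\partial\Om$ (used already in the proof of Lemma~\ref{th:h_derivative_bound}), we get
\[
  \frac{\mathrm d}{\mathrm d t}\,d_1(z_1(t)) = \nabla d_1(z_1)\cdot\dot z_1 = -\frac{1}{4\pi d_1(z_1)} + \nu(s)\cdot O(1) \le -\frac{1}{4\pi d_1(z_1)}+M.
\]
For $d_1<1/(8\pi M)$ this is $\le -1/(8\pi d_1)$, so $d_1(z_1(t))$ strictly decreases and, integrating $2\pi\,\mathrm d(d_1^2)\le -\,\mathrm d t$ (roughly), hits $0$ in finite time $T^{\partial\Om}_{\mathrm{coll}}$ with $T^{\partial\Om}_{\mathrm{coll}}\le 4\pi d_1(z_1(0))^2 \le 4\pi\delta_0^2$; a sharper bookkeeping keeping the $M$ correction but noting $d_1$ only decreases gives the claimed $2\pi\delta_0^2+O(\delta_0^3)$ once one integrates $\dot{(d_1^2)} = -1/(2\pi) + 2 d_1\,\nu(s)\cdot O(1)$ and bounds the second term by $2M\delta_0$ over a time window $O(\delta_0^2)$.

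The remaining point is to show that no \emph{other} collision occurs first, i.e.\ that $T_1$ coincides with the boundary-collision time and the exit of $\Dom_{n,\delta,\gamma_0/2}$ is caused precisely by $d_1(z_1)\to0$, not by two of the far dislocations colliding or one of them approaching $\partial\Om$. For this I would bound $|\dot z_i|$ for $i\ge2$ on $[0,T_1)$: by \eqref{502}, \eqref{506}-type splitting and the estimates \eqref{dyG_bound}, \eqref{estnablah}, the velocity of each far dislocation is $O(1/\gamma_0)$ uniformly, hence over the short time window of length $\le 4\pi\delta_0^2$ each $z_i$ moves by at most $O(\delta_0^2/\gamma_0)$. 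Choosing $\delta_0$ small enough that this displacement is $\ll \gamma_0/2$ guarantees $d_{n-1}(z'(t))>\gamma_0/2$ throughout, so the only way to exit the good region is $d_1(z_1)=0$. Since $d_1(z_1(t))$ is shown to reach $0$ in time $\le 2\pi\delta_0^2+O(\delta_0^3)$, we conclude $T^{\partial\Om}_{\mathrm{coll}}=T_1<T^*$, $z(t)\in\Om^n$ for $t<T^{\partial\Om}_{\mathrm{coll}}$, $z_1(T^{\partial\Om}_{\mathrm{coll}})\in\partial\Om$, and $z'(T^{\partial\Om}_{\mathrm{coll}})\in\Om^{n-1}$.

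The main obstacle, and the step needing the most care, is making the continuation argument airtight: one must verify that the force estimate of Theorem~\ref{thm:fatal} genuinely remains valid for all $t$ up to collision (it is an estimate on the open region $\Dom_{n,\delta,\gamma}$, and $\delta$ shrinks along the flow, which only helps), and that the ``$O(1)$'' and ``$O(1/\gamma_0)$'' bounds are uniform in $t$ — which they are, because they depend only on $\rhobar$, $n$, $\sigma$, and the lower bound $\gamma_0/2$ on the separation of the other dislocations, all fixed. A secondary subtlety is the regularity of $t\mapsto d_1(z_1(t))$: it is $C^1$ as long as $z_1$ stays within the $\rhobar$-tubular neighbourhood where $d_1$ is $C^2$ (Lemma~14.16 of \cite{GT}), which is exactly the regime we are in; once this is in place the ODE comparison giving \eqref{eq:ubht} is elementary.
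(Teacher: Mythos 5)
Your proposal follows essentially the same route as the paper: the paper likewise derives the differential inequality $\frac{\dd}{\dt}\big[\frac12 d_1(z_1)^2\big]\le -\frac{1}{4\pi}(1-c(\delta))$ with $c(\delta)=O(\delta)$ (using the same estimates \eqref{eq:dxG_bound} and \eqref{eq:dh_bound} that underlie Theorem~\ref{thm:fatal}) and integrates to obtain \eqref{eq:ubht}, then bounds the speeds of $z_2,\dots,z_n$ via \eqref{dyG_bound} to keep $d_{n-1}(z')$ above $\gamma_0/2$ on the short collision window. The only point to tidy is that Theorem~\ref{thm:fatal} as stated assumes $\gamma>\rhobar$, so for small $\gamma_0$ you should quote the force estimate \eqref{estfatal} with $\gamma=\gamma_0/2$ directly from its derivation (which only needs $\gamma>2\delta$), exactly as the paper does.
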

The structure of the proof is the following: we first find an upper bound on the collision time for dislocation $z_1$ hitting the boundary, conditional on the configuration $z$ remaining in $\Dom_{n,\delta,\gamma}$.
In the second half of the proof, after fixing $\gamma_0\in(0,\diam\Omega/2)$, we establish a lower bound on the time at which the configuration $z$ leaves the set $\Dom_{n,\delta,\gamma_0/2}$ due to $d_{n-1}(z')$ becoming smaller than $\gamma_0/2$.
The proof is concluded by finding conditions on $\delta$ under which the former collision time is smaller than the latter: this is contained in inequality \ref{eq:time_comparison_bdry} below.
\begin{proof}
Writing the renormalised energy \eqref{renen} as in \eqref{eq:E_1rest},
the equation of motion \eqref{500} for $z_1$ reads
$$\dot z_1(t)=-\nabla_{z_1}\E_n(z_1(t),\ldots,z_n(t))=-\nabla_{z_1} \E_1(z_1(t))-\sum_{i=2}^n b_1b_i\nabla_{z_1} G_\Om(z_1(t),z_i(t)).$$
We now compute the time derivative $ \frac{\dd}{\dt}\b[\frac12d_1(z_1(t))^2\b]$ and show that it is negative, so that the dislocation $z_1(t)$ moves towards the boundary.
Indeed, we have
\begin{equation}
  \frac{\dd}{\dt}\left[\frac12d_1(z_1(t))^2\right] = A_1(z(t)),\label{eq:dynd_1}
\end{equation}
where 
$$A_1(z) :=  -d_1(z_1)\nu(s)\cdot\dot{z}_1=d_1(z_1)\nabla_{z_1}\E_1(z_1)\cdot\nu(s)+\sum_{i=2}^nb_1b_i d_1(z_1)\nabla_{z_1} G_\Om(z_1,z_i)\cdot\nu(s).$$

Now, we apply the bounds \eqref{eq:dxG_bound} and \eqref{eq:dh_bound} to estimate
\begin{align*}
  A_1(z)&\leq d_1(z_1)\nabla_{z_1}\E_1(z_i)
    \cdot\nu(s)+\sum_{i=2}^n \b|d_1(z_1)\nabla_{z_1}G_\Om(z_1,z_j)
                                  \cdot\nu(s)\b|\\
  &\leq-\frac{1}{4\pi}+\bg[\frac{C_\sigma}{\pi\rhobar}+
    \sum_{i=2}^n\frac{2\b(|z_i-x_\mathrm{ext}|^2-\rhobar^2\b)\b(\rhobar+d_1(z_1)\b)}
    {\pi\rhobar^2\b(|z_1-z_i|-d_1(z_1)\b)^2}\bg]\frac{d_1(z_1)}{2}.
\end{align*}
Fix $\gamma_0>0$, and assume that $z\in\Dom_{n,\delta,\gamma_0/2}$ for a certain $\delta\in(0,\gamma_0/4)$ (see \eqref{tri}(a)); we can estimate $A_1(z)$ and find conditions on $\delta$ in such a way that $A_1(z)<0$.
The estimates in \eqref{33} hold with $\gamma_0/2$ in place of $\gamma$ (again, see \eqref{tri}(b) for an illustration), so that, using \eqref{510}, estimate \eqref{est4} reads
\begin{equation*}
\begin{split}
\sum_{i=2}^n & \frac{2\b(|z_i-x_\mathrm{ext}|^2-\rhobar^2\b)\b(\rhobar+d_1(z_1)\b)}{\pi\rhobar^2\b(|z_1-z_i|-d_1(z_1)\b)^2} \\
& \leq\frac{2(\rhobar+\delta)(n-1)}{\pi\rhobar^2}\!\!\left(1+4\frac{\rhobar+2\delta}{\gamma_0-4\delta}+16\frac{\delta(\rhobar+\delta)}{(\gamma_0-4\delta)^2}\right).
\end{split}
\end{equation*}
In turn, we obtain
\begin{equation}\label{est5}
\begin{split}
A_1(z)\leq & -\frac{1}{4\pi}+\frac\delta{4\pi}\bg[\frac{2C_\sigma}{\rhobar}+\frac{4(\rhobar+\delta)}{\rhobar^2}(n-1)\left(1+4\frac{\rhobar+2\delta}{\gamma_0-4\delta}+16\frac{\delta(\rhobar+\delta)}{(\gamma_0-4\delta)^2}\right)\bg] \\
=& -\frac1{4\pi}(1-c(\delta)),
\end{split}
\end{equation}
where
\begin{equation}\label{eq:c}
c(\delta):=\frac{2\delta}{\rhobar}\bg[C_\sigma+\frac{2(\rhobar+\delta)}{\rhobar}(n-1)\left(1+4\frac{\rhobar+2\delta}{\gamma_0-4\delta}+16\frac{\delta(\rhobar+\delta)}{(\gamma_0-4\delta)^2}\right)\bg].
\end{equation}
Therefore, from \eqref{est5} we obtain the following estimate for \eqref{eq:dynd_1}:
\begin{equation}\label{eq:estdind_1}
 \frac{\dd}{\dt}\Big[\frac12d_1(z_1)^2\Big] = A_1(z)\leq-\frac1{4\pi}(1-c(\delta)).
\end{equation}
Expanding in powers of $\delta$ near $\delta=0$, \eqref{eq:c} can be expressed as
\begin{equation}\label{eq:cdelta}
c(\delta)=\frac{2\delta}\rhobar\left[C_\sigma+2(n-1)\left(1+4\frac\rhobar{\gamma_0}\right)\right]+O(\delta^2)=C_{\rhobar,n,\gamma_0,\sigma}\delta+O(\delta^2),
\end{equation}
which implies that there exists $\delta_0\in(0,\gamma_0/4)$ small enough such that $1-c(\delta)>0$ for all $\delta\leq\delta_0$, so that there must be a time $t=T_{\rm coll}^{\partial\Om}$ at which $z_1(T_{\rm coll}^{\partial\Om})\in\partial\Om$.

Integrating \eqref{eq:estdind_1} between $t=0$ and $t=T_\mathrm{coll}^{\partial\Om}$ entails that 
\begin{equation*}
  T_{\mathrm{coll}}^{\partial\Om}\leq\frac{2\pi d_1(z_1(0))^2}
  {1-c(\delta)}\leq\frac{2\pi \delta_0^2}
  {1-c(\delta_0)}=2\pi\delta_0^2+O(\delta_0^3),
\end{equation*}
where the second inequality holds true because the constant $C_{\rhobar,n,\gamma_0,\sigma}$ in \eqref{eq:cdelta} is strictly positive.
Estimate \eqref{eq:ubht} is proved.

Next, we note that
\begin{equation*}
  \dot{z}_i(t)= B_i(z(t)):= -\nabla_{z_i}\E_n(z(t))\quad\text{for }i=2,\ldots,n,
\end{equation*}
so using the expression of $\E_n$ in \eqref{eq:E_1rest} and the bound
\eqref{dyG_bound},
\begin{equation}\label{eq:zi_speed_bound}
\begin{split}
  |B_i(z)| \leq & \frac{1}{2\pi|z_1-z_i|}
      +\sum_{\substack{k=2\\ k\neq i}}^n \frac{1}{2\pi|z_i-z_k|}
      +\frac{n}{2\pi d_1(z_i)}\\
    \leq & \frac{1}{2\pi(d_{n-1}(z')-d_1(z_1))}
      +\frac{2n-2}{2\pi d_{n-1}(z')} \leq \frac{2n-1}{2\pi(d_{n-1}(z')-\delta_0)}.
\end{split}
\end{equation}
We want to use \eqref{eq:zi_speed_bound} to obtain bounds on the closest that any two dislocations in the ensemble $z'$ can get either to each other or to the boundary $\partial\Om$.
To do this, we bound the rate at which the minimum distance among the dislocations $z'$ (and the boundary) decreases.
Indeed, by \eqref{eq:zi_speed_bound},
\begin{equation*}
  d_{n-1}\b(z'(t)\b)-d_{n-1}\b(z'(0)\b) 
  \geq -2\int_0^t \max_{i=2,\ldots,n}|\dot{z}_i(s)|\ds
    \geq -\int_0^t\frac{2n-1}{\pi(d_{n-1}(z'(s))-\delta_0)}\ds,
\end{equation*}
and, by writing the left-hand side as $\int_0^t \dot d_{n-1}(z'(s))\ds$ we are led to solving 
\begin{equation*}
\dot d_{n-1}(z')\geq -\frac{2n-1}{\pi(d_{n-1}(z')-\delta_0)}
\end{equation*}
with initial condition $d_{n-1}(z'(0))=\gamma_0$.
We find that
\begin{equation*}
  d_{n-1}\b(z'(t)\b) \geq \delta_0+\sqrt{(\gamma_0-\delta_0)^2-\frac{4n-2}{\pi}t}.  
\end{equation*}
This entails that, given $\gamma\in(0,\gamma_0)$, $d_{n-1}(z'(t)) \geq \gamma$ for all $t\leq T(\gamma)$, with
\begin{equation}\label{eq:Tprime}
T(\gamma):=\frac{\pi}{4n-2}\Big((\gamma_0-\delta_0)^2-(\gamma-\delta_0)^2\Big).
\end{equation}
It is clear that $T(\gamma)$ is the earliest time at which either any two dislocations in $z'$ are $\gamma$ far apart from each other, or one of the dislocations in $z'$ gets to a distance $\gamma$ from the boundary $\partial\Om$.
Therefore, a sufficient condition to observe $z_1$ colliding with the boundary before any other collision event is that $T_{\mathrm{coll}}^{\partial\Om}< T(\gamma_0/2)$, i.e.,
\begin{equation}\label{eq:time_comparison_bdry}
  \frac{2\delta_0^2}{1-c(\delta_0)}<\frac{\gamma_0(3\gamma_0-4\delta_0)}{8(2n-1)}.
\end{equation}
The choice of $\delta_0$ and the fact that $4\delta_0<\gamma_0$ imply that both sides of \eqref{eq:time_comparison_bdry} are positive and, by possibly reducing $\delta_0$, it is easy to see that $\delta_0$ can be chosen such that \eqref{eq:time_comparison_bdry} is satisfied.
The theorem is proved.
\end{proof}

\begin{remark}
We note that the constant $c(\delta)$ in \eqref{eq:c} is invariant under simultaneous scaling of geometric parameters $\rhobar$, $\delta$, and $\gamma$, and is thus invariant under dilations of the coordinate system. 

Moreover, we single out two different limits for $c(\delta)$, which will be useful in the sequel: if $\Omega$ is the half space, then $\Omega$ satisfies the interior and exterior disk conditions \eqref{1011}
  with arbitrarily large $\rhobar$, so letting $\rhobar\to\infty$, we find $c(\delta)\to8\gamma_0\delta(n-1)/(\gamma_0-2\delta)^2$.
If the system consists only of one dislocation, then by plugging $n-1$ in \eqref{eq:c} we obtain $c(\delta)=2\delta C_\sigma/\rhobar$; notice that $c(\delta)=0$ for one dislocation in the half plane.

It is then clear that, focusing the attention on one particular dislocation, say $\bar z$, $c(\delta)$ tracks the influence on $\bar z$ of the other dislocations and on the geometry of the domain, in terms of the force exerted on $\bar z$.
\end{remark}

\subsection{Collision between dislocations}
We now turn to a scenario for collisions of dislocations.
We will find sufficient conditions for a collision between two dislocations to occur before any other collision event.
We suppose that we have $n\in\N$ ($n\geq2$) dislocations in $\Omega$ two of which, $z_1$ and $z_2$, with Burgers moduli $b_1=+1$ and $b_2=-1$, are much closer to each other than the others; we also suppose that the other $n-2$ dislocations, $z_3,\ldots,z_n$, are sufficiently distant from each other and from the boundary.
Our theorem states that $z_1$ and $z_2$ will collide in finite time, and that this collision happens before any other collision event occurs.

Here we adapt our notation by defining $z'':=(z_3,\ldots,z_n)$ so that a trajectory of the evolution of the configuration of the $n$ dislocations can be represented by the vector $z(t):=(z_1(t),z_2(t),z''(t))\in\Omega^n$.
In this case, the meaningful trajectories for the statement of our theorem are those that lie within sets of the form
\begin{equation*}
\begin{split}
  \mathcal{C}_{n,\zeta,\eta}:=\B\{(z_1,z_2,z'')\in\Omega^n\Bsep & |z_1-z_2|< \zeta, d_{n-2}(z'')>\eta, \\ 
  & \dist\b(\{z_1,z_2\},\{z_3,\ldots,z_n\}\cup\partial\Om\b)>\eta\B\},
   \end{split}
\end{equation*}
with $\zeta<\eta$ chosen in a way that we subsequently quantify properly.

The geometric meaning of the set $ \mathcal{C}_{n,\zeta,\eta}$ defined above is the following: if $z\in  \mathcal{C}_{n,\zeta,\eta}$, it means that $z_1$ and $z_2$ lie at a distance of at most $\zeta$ from each other, while all the other dislocations $z_3,\ldots,z_n$ lie at a distance of at least $\eta$ away from the boundary and their mutual distance is also at least $\eta$. Moreover, $z_1$ and $z_2$ are at least $\eta$ far away from any other dislocations and from the boundary.

\begin{theorem}\label{thm:collision}
Let $n\in\N$ ($n\geq2$) and let $\eta_0\in(0,\diam\Om/2)$.
There exists $\zeta_0>0$ such that, if $z(0)\in\mathcal{C}_{n,\zeta_0,\eta_0}$, then there exists $T_{\mathrm{coll}}^{\pm}>0$ such that the evolution $z(t)$ is defined for $t\in[0,T_{\mathrm{coll}}^{\pm}]$, $z(t)\in\Omega^n$ for $t\in[0,T_{\mathrm{coll}}^{\pm})$, and $z_1(T_{\mathrm{coll}}^{\pm})=z_2(T_{\mathrm{coll}}^{\pm})\in\Omega$ and $z''(T_{\mathrm{coll}}^{\pm})\in\Omega^{n-2}$.
Furthermore, as $\zeta_0\to0$, the following estimate holds
\begin{equation}\label{Tcollprime}
  T_{\mathrm{coll}}^{\pm}\leq\frac{\pi\zeta_0^2\eta_0^2}{2(\eta_0^2-\zeta_0^2-2(n-2)\zeta_0\eta_0)}.
  \end{equation}
\end{theorem}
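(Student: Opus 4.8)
The plan is to mirror the structure of the proof of Theorem~\ref{thm:boundary_collision}, but now tracking the squared separation $\tfrac12|z_1-z_2|^2$ of the colliding pair instead of the squared distance to the boundary. First I would split the renormalised energy as $\E_n = \E_2(z_1,z_2) + \E_{n-2}(z'') + \sum_{i\ge 3}\big(b_1b_i G_\Om(z_1,z_i)+b_2b_i G_\Om(z_2,z_i)\big)$, and compute the equations of motion \eqref{500} for $z_1$ and $z_2$. The key is that the dominant interaction between $z_1$ and $z_2$ comes from the logarithmic term $-b_1b_2\,\tfrac1{2\pi}\log|z_1-z_2| = +\tfrac1{2\pi}\log|z_1-z_2|$ (since $b_1b_2 = -1$), whose gradient is the attractive force $-\tfrac{z_1-z_2}{2\pi|z_1-z_2|^2}$ on $z_1$ and the opposite on $z_2$. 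Writing $w:=z_1-z_2$, I would compute
\begin{equation*}
  \frac{\dd}{\dt}\Big[\tfrac12|w(t)|^2\Big] = w\cdot(\dot z_1-\dot z_2)
  = -\frac{1}{\pi} + w\cdot R(z),
\end{equation*}
where the $-1/\pi$ comes from the two copies of the $-\tfrac1{2\pi}|w|^{-2}w$ contribution, and $R(z)$ collects all remaining terms: the regular parts $\nabla k_\Om(z_1,z_2)$, the self-energy terms $\tfrac12\nabla h_\Om(z_1)$, $\tfrac12\nabla h_\Om(z_2)$, and the interactions with $z_3,\dots,z_n$ and the boundary response. On $\mathcal{C}_{n,\zeta,\eta_0}$ all these remaining terms are bounded uniformly: the $z_i$ ($i\ge 3$) are at distance $>\eta_0$ from $z_1,z_2$ so \eqref{dyG_bound} (or the analogous bound via \eqref{eq:gradient_bound} applied on balls of radius $\sim\eta_0$) controls those gradients by $O(1/\eta_0)$; similarly $k_\Om$ and $h_\Om$ are smooth away from the diagonal and from $\partial\Om$, and since $z_1,z_2$ are at distance $>\eta_0$ from the boundary, $|\nabla k_\Om(z_1,z_2)|$ and $|\nabla h_\Om(z_j)|$ are bounded in terms of $\eta_0$ and $\diam\Om$ via Lemma~\ref{esthfar} and the maximum principle. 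Hence $|R(z)| \le C(n,\eta_0)$, and more precisely, tracking the number of terms, $|w\cdot R(z)| \le \zeta\cdot\big(\text{something like }\tfrac{2(n-2)}{2\pi\eta_0}+\text{bounded}\big)$, giving the form appearing in the denominator of \eqref{Tcollprime}. This yields
\begin{equation*}
  \frac{\dd}{\dt}\Big[\tfrac12|w(t)|^2\Big] \le -\frac1\pi\big(1-\tilde c(\zeta)\big),
\end{equation*}
with $\tilde c(\zeta)\to 0$ as $\zeta\to 0$; choosing $\zeta_0$ small makes the right-hand side strictly negative, forcing $|w(t)|\to 0$ at a finite time $T_{\mathrm{coll}}^{\pm}$, and integrating gives the bound $T_{\mathrm{coll}}^{\pm}\le \tfrac{\pi\zeta_0^2}{2(1-\tilde c(\zeta_0))}$, which matches \eqref{Tcollprime} once $\tilde c$ is identified with $\zeta_0^2/\eta_0^2 + 2(n-2)\zeta_0/\eta_0$ up to the stated algebra.

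For the second half — ensuring no other collision happens first — I would repeat the argument of Theorem~\ref{thm:boundary_collision}: bound $|\dot z_i|$ for $i\ge 3$ using \eqref{dyG_bound}, bound $|\dot z_1|,|\dot z_2|$ (these blow up like $1/|w|$, but one only needs that their component transverse to $w$, which governs the motion of the pair's centre and hence its distance to $z''$ and $\partial\Om$, stays controlled — alternatively, note $\tfrac12(\dot z_1+\dot z_2)$ has no singular contribution since the singular forces cancel), and thereby get a differential inequality $\dot d_{n-2}(z'')\ge -C/\big(d_{n-2}(z'')-\zeta_0\big)$ or similar. Solving it shows $d_{n-2}(z''(t))$ stays above, say, $\eta_0/2$ until a time $T(\eta_0/2) = \tfrac{\pi}{\text{const}}\big((\eta_0-\zeta_0)^2 - (\eta_0/2-\zeta_0)^2\big)$, and one imposes $T_{\mathrm{coll}}^{\pm} < T(\eta_0/2)$, an inequality of the form $\tfrac{\pi\zeta_0^2}{2(1-\tilde c(\zeta_0))} < \text{const}\cdot\eta_0^2$ that is clearly satisfiable for $\zeta_0$ small. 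A separate point to handle: one must also check that $z_1$ and $z_2$ individually stay in $\Omega$ (away from $\partial\Om$) until they collide — this follows because the pair's centre moves with bounded speed over the short time $T_{\mathrm{coll}}^{\pm}=O(\zeta_0^2)$ while starting at distance $>\eta_0$ from $\partial\Om$.

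The main obstacle I anticipate is the control of the \emph{centre-of-mass motion} of the colliding pair, i.e. making rigorous that although $|\dot z_1|,|\dot z_2|\to\infty$ as $|w|\to 0$, the quantities that matter — the distance of $\{z_1,z_2\}$ to $z''\cup\partial\Om$ — degrade only at a bounded rate. The clean way is to observe that the singular parts of $\dot z_1$ and $\dot z_2$ are exactly $\mp\tfrac{w}{2\pi|w|^2}$, so $\dot z_1+\dot z_2$ is free of the singularity and bounded on $\mathcal C_{n,\zeta_0,\eta_0}$ by $O(1/\eta_0)$; then $\dist(\{z_1,z_2\},\cdot)$ changes at rate at most $|\dot z_1+\dot z_2| + |\dot w|$, and while $|\dot w|$ is large, $\int_0^{T_{\mathrm{coll}}^{\pm}}|\dot w|\,\dd t \ge |w(0)|-|w(T)| = |w(0)| = O(\zeta_0)$ is actually \emph{small} because $|w|$ is monotonically decreasing from a small value — so the total displacement of each of $z_1,z_2$ over $[0,T_{\mathrm{coll}}^{\pm}]$ is $O(\zeta_0)+O(\eta_0^{-1}\zeta_0^2)=O(\zeta_0)$, keeping them in the regime $\dist>\eta_0 - O(\zeta_0) > \eta_0/2$. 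Once this is in place, the rest is the same bookkeeping as in Theorem~\ref{thm:boundary_collision}, and the explicit constant in \eqref{Tcollprime} comes out by carrying the $(n-2)$-dependence and the $1/\eta_0$ factors through the estimate on $w\cdot R(z)$.
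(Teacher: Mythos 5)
Your overall strategy coincides with the paper's: the same splitting of $\E_n$, the same monitored quantity $\tfrac12|z_1-z_2|^2$ with leading contribution $-1/\pi$, and a two-stage argument comparing $T^{\pm}_{\mathrm{coll}}$ with the first time $d_{n-2}(z'')$ can drop to $\eta_0/2$. Your centre-of-mass observation in the second half is sound and plays the same role as the paper's device of tracking the distance function for the augmented ensemble $(\theta,z'')$ with $\theta$ on the segment joining $z_1$ and $z_2$, together with the near-cancellation of the dipole forces $\nabla_{z_i}G_\Om(z_i,z_1)-\nabla_{z_i}G_\Om(z_i,z_2)$ via the mean value theorem.

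The gap is in how you dispose of the terms $\nabla_{z_1}k_\Om(z_1,z_2)$ and $\tfrac12\nabla h_\Om(z_1)$ (and their $z_2$-counterparts). You bound each of them separately by a constant depending on $\eta_0$ and $\diam\Om$; after dotting with $w=z_1-z_2$ this produces a contribution to $\tilde c(\zeta)$ that is \emph{linear} in $\zeta$ with a coefficient independent of $n-2$ (of order $\log(1/\eta_0)/\eta_0$, coming from Lemma~\ref{esthfar}). That is enough to conclude that a collision occurs and that $T^{\pm}_{\mathrm{coll}}\leq\tfrac{\pi}{2}\zeta_0^2\b(1+O(\zeta_0)\b)$, but it does not yield \eqref{Tcollprime}: for $n=2$ the denominator there is $\eta_0^2-\zeta_0^2$, i.e.\ the correction must be quadratic in $\zeta_0$, and your linear term would dominate it, giving a strictly weaker bound. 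The missing ingredient is the cancellation between $\nabla_{z_1}k_\Om(z_1,z_2)$ and $\tfrac12\nabla_{z_1}h_\Om(z_1)$: by \eqref{eq:h_Om} and the symmetry of $k_\Om$ one has $\tfrac12\nabla_{z_1}h_\Om(z_1)=\nabla_{z_1}k_\Om(z_1,y)|_{y=z_1}$, so the difference is a first-order increment of the function $z_2\mapsto\nabla_{z_1}k_\Om(z_1,z_2)$, which is harmonic in $z_2$; a Taylor expansion with Lagrange remainder, combined with the interior gradient estimate \eqref{eq:gradient_bound} and the maximum principle to control the mixed second derivative by $O(1/\eta_0^2)$, shows that the whole bracket in \eqref{firstpart}, dotted with $z_1-z_2$, is $O(\zeta^2/\eta_0^2)$. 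You assert at the end that $\tilde c$ "is identified with $\zeta_0^2/\eta_0^2+2(n-2)\zeta_0/\eta_0$", but that identification is exactly what this Taylor-expansion step supplies and your separate uniform bounds do not. (As a side remark, the constants the paper's own proof produces are $8\zeta^2/\eta_0^2+4(n-2)\zeta/\eta_0$ rather than those displayed in \eqref{Tcollprime}; but the quadratic-in-$\zeta$ structure of the first correction is the essential point, and it requires the cancellation.)
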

Analogously to the proof of Theorem \ref{thm:boundary_collision}, we first find an upper bound on the collision time for dislocations $z_1$ and $z_2$, conditional on the configuration $z$ remaining in $\mathcal{C}_{n,\zeta,\eta}$.
In the second half of the proof, after fixing $\eta_0\in(0,\diam\Omega/2)$, we establish a lower bound on the time at which the configuration $z$ leaves the set $\mathcal{C}_{n,\zeta,\eta_0/2}$ due to $d_{n-2}(z'')$ becoming smaller than $\eta_0/2$.
The proof is concluded by finding conditions on $\zeta$ under which the former collision time is smaller than the latter: this is contained in inequality \eqref{928} below.
\begin{proof}
As in the proof of Theorem \ref{thm:boundary_collision}, recalling that $b_1=+1$ and $b_2=-1$, we express the renormalised energy \eqref{renen} as
\begin{equation}\label{renensplit}
  \E_n(z_1,\ldots,z_n) = \E_2(z_1,z_2) + \E_{n-2}(z'')
  +\sum_{i=3}^n b_i \b[G_\Om(z_1,z_i)-G_\Om(z_2,z_i)\b],
\end{equation}
where, again recalling \eqref{renen} with $n=2$, $b_1=+1$, and $b_2=-1$, 
\begin{equation}\label{renen2}
\E_2(z_1,z_2)=\frac1{2\pi}\log|z_1-z_2|+\frac12 h_\Om(z_1)+\frac12 h_\Om(z_2)- k_\Om(z_1,z_2).
\end{equation}
The equations of motion for $z_1$ and $z_2$ read
\begin{equation}\label{eq_mot1}
\begin{split}
\dot z_1(t)= & -\nabla_{z_1} \E_n(z(t))=-\nabla_{z_1} \E_2(z_1(t),z_2(t))-\sum_{i=3}^n b_i\nabla_{z_1}G_\Om(z_1(t),z_i(t)) \\
=& \frac1{2\pi}\frac{z_2(t)-z_1(t)}{|z_1(t)-z_2(t)|^2}-\frac12\nabla_{z_1} h_\Om(z_1(t))+\nabla_{z_1}k_\Om(z_1(t),z_2(t)) \\
& -\sum_{i=3}^n b_i\nabla_{z_1}G_\Om(z_1(t),z_i(t))
\end{split}
\end{equation}
\begin{equation}\label{eq_mot2}
\begin{split}
\dot z_2(t)= & -\nabla_{z_2} \E_n(z(t))=-\nabla_{z_2} \E_2(z_1(t),z_2(t))+\sum_{i=3}^n b_i\nabla_{z_2}G_\Om(z_2(t),z_i(t)) \\
=& \frac1{2\pi}\frac{z_1(t)-z_2(t)}{|z_1(t)-z_2(t)|^2}-\frac12\nabla_{z_2} h_\Om(z_2(t))+\nabla_{z_2}k_\Om(z_1(t),z_2(t)) \\
& +\sum_{i=3}^n b_i\nabla_{z_2}G_\Om(z_2(t),z_i(t)) \\
\end{split}
\end{equation}

We now fix $\eta_0>0$, consider $\zeta\in(0,\eta_0/2)$, and for $z\in\mathcal{C}_{n,\zeta,\eta_0/2}$ compute the time derivative 
\begin{equation*}
  \frac{\dd}{\dt}\left[\frac12|z_1(t)-z_2(t)|^2\right] = A_2(z(t)),
\end{equation*}
where $A_2(z) := (z_1-z_2)\cdot (\dot z_1(t)-\dot z_2(t))$.
We will determine conditions on $\zeta$ in such a way that $A_2(z)<0$ and therefore the dislocations $z_1$ and $z_2$ attract.

Using the explicit expressions from \eqref{eq_mot1} and \eqref{eq_mot2}, the contribution to $A_2(z)$ coming from $\E_2(z_1,z_2)$ is
\begin{equation}\label{firstpart}
\begin{split}
(z_1-z_2) \cdot\bigg(  -\frac{z_1-z_2}{\pi|z_1-z_2|^2} & -\frac12\nabla_{z_1} h_\Om(z_1)+\nabla_{z_1}k_\Om(z_1,z_2) \\ 
&\qquad\qquad+\frac12\nabla_{z_2} h_\Om(z_2)-\nabla_{z_2}k_\Om(z_1,z_2)\bigg), \\
= -\frac1{\pi}+(z_1-z_2)\cdot\bigg( & \nabla_{z_1}k_\Om(z_1,z_2)  -\frac12\nabla_{z_1} h_\Om(z_1) \\ 
&\qquad\qquad-\nabla_{z_2}k_\Om(z_1,z_2)+\frac12\nabla_{z_2} h_\Om(z_2)\bigg).
\end{split}
\end{equation}
Next, by Taylor expanding the function $z_2\mapsto F_{z_1}(z_2):= \nabla_{z_1}k_\Om(z_1,z_2)$ about the point $z_1$ using the Lagrange form for the remainder, and recalling \eqref{eq:h_Om}, we obtain 
\begin{equation}\label{Taylor1}
  \left|(z_1-z_2)\cdot\left(\nabla_{z_1}k_\Om(z_1,z_2)-\frac12\nabla_{z_1}h_\Om(z_1)\right)\right| =  |(z_1-z_2)\cdot \nabla_{z_2}\nabla_{z_1}k_\Om(z_1,\theta)(z_2-z_1)|,
\end{equation}
where $\theta$ is an intermediate point in the line segment joining $z_1$ and $z_2$.
Notice now that $z_2\mapsto F_{z_1}(z_2)$ is a vector-valued harmonic function which, by \eqref{eq:k_Om_problem}, satisfies 
\begin{equation*}
\begin{cases}
-\Delta F_{z_1}(z_2)=0 & \text{in $\Omega$,} \\
F_{z_1}(z_2)=\displaystyle\frac{z_2-z_1}{2\pi|z_1-z_2|^2} & \text{on $\partial\Omega$.}
\end{cases}
\end{equation*}
Applying \eqref{eq:gradient_bound} to $F_{z_1}(z_2)$ on the ball centred at $\theta$ of radius $r=\dist(\theta,\partial\Om)$, and using the maximum principle on the domains $B_r(\theta)\subset\Omega$, we can estimate \eqref{Taylor1} by
\begin{equation}\label{Taylor2}
\begin{split}
|z_1-z_2|^2 \cdot& |\nabla_{z_2}\nabla_{z_1} k_\Om(z_1,\theta)| \leq |z_1-z_2|^2 \frac2{\dist(\theta,\partial\Omega)}\sup_{y\in\partial B_r(\theta)} |F_{z_1}(y)| \\
\leq & \frac{2|z_1-z_2|^2}{\dist(\theta,\partial\Omega)}\sup_{y\in\partial \Omega} |F_{z_1}(y)|= \frac{|z_1-z_2|^2}{\dist(\theta,\partial\Omega)}\sup_{y\in\partial \Omega} \frac{1}{\pi|z_1-y|} \\
\leq & \frac{|z_1-z_2|^2}{\pi\dist(\theta,\partial\Omega)\dist(z_1,\partial\Omega)}\leq 4\frac{|z_1-z_2|^2}{\pi \eta_0^2}.
\end{split}
\end{equation}
From \eqref{Taylor1} and \eqref{Taylor2} we have obtained that 
\begin{equation}\label{Taylor3}
\left|(z_1-z_2)\cdot\left(\nabla_{z_1}k_\Om(z_1,z_2)-\frac12\nabla_{z_1}h_\Om(z_1)\right)\right|\leq 4\frac{|z_1-z_2|^2}{\pi \eta_0^2}
\end{equation}
Applying the same argument to the function $z_1\mapsto F_{z_2}(z_1):=\nabla_{z_2}k_\Om(z_1,z_2)$ and recalling that $z\in\mathcal{C}_{n,\zeta,\eta_0/2}$, we obtain
\begin{equation}\label{Taylor4}
\left|(z_1-z_2)\cdot\left(\nabla_{z_2}k_\Om(z_1,z_2)-\frac12\nabla_{z_2}h_\Om(z_2)\right)\right|\leq 4\frac{|z_1-z_2|^2}{\pi \eta_0^2}
\end{equation}
By using \eqref{Taylor3} and \eqref{Taylor4}, and the fact that $|z_1-z_2|<\zeta$, we can bound \eqref{firstpart} by
\begin{equation}\label{firstest}
\begin{split}
-\frac1{\pi}+ & (z_1-z_2)\cdot\bigg(\nabla_{z_1}k_\Om(z_1,z_2)-\frac12\nabla_{z_1} h_\Om(z_1)-\nabla_{z_2}k_\Om(z_1,z_2)+\frac12\nabla_{z_2} h_\Om(z_2)\bigg) \\
\leq & -\frac1{\pi}+\frac{8\zeta^2}{\pi \eta_0^2}.
\end{split}
\end{equation}
We turn now to the remaining interaction terms in the estimate of $A_2(z)$, namely, we are left with estimating
$$\bigg|(z_1-z_2)\cdot\bigg(-\sum_{i=3}^n b_i\nabla_{z_1}G_\Om(z_1(t),z_i(t))-\sum_{i=3}^n b_i\nabla_{z_2}G_\Om(z_2(t),z_i(t))\bigg)\bigg|.$$
By using the estimate \eqref{dyG_bound} on the gradient of the Green's function $G_\Om$, we obtain
\begin{equation}\label{secondest}
\begin{split}
\bigg|(z_1-z_2)\cdot & \bigg(-\sum_{i=3}^n b_i\nabla_{z_1}G_\Om(z_1(t),z_i(t))-\sum_{i=3}^n b_i\nabla_{z_2}G_\Om(z_2(t),z_i(t))\bigg)\bigg| \\
\leq & |z_1-z_2|\sum_{i=3}^n\b(|\nabla_{z_1}G_\Om(z_1(t),z_i(t))|+|\nabla_{z_2}G_\Om(z_2(t),z_i(t))|\b) \\
\leq & |z_1-z_2|\sum_{i=3}^n\left(\frac1{2\pi|z_1-z_i|}+\frac1{2\pi d_1(z_1)}+\frac1{2\pi|z_2-z_i|}+\frac1{2\pi d_1(z_2)}\right) \\
\leq & \frac{4(n-2)\zeta}{\pi\eta_0}.
\end{split}
\end{equation}
Combining now \eqref{firstest} and \eqref{secondest}, we finally obtain
\begin{equation}\label{thirdest}
 A_2(z)\leq -\frac1{\pi}\left(1 - \frac{8\zeta^2}{\eta_0^2}-\frac{4(n-2)\zeta}{\eta_0}\right)=:-\frac1\pi(1-c(\zeta)).
\end{equation}
It is easy to see that $c(\zeta)\geq0$ and that it is smaller than $1$ for 
$$\zeta<\zeta_0:=\eta_0\b(\sqrt{(n-2)^2+2}-(n-2)\b)/4,$$ 
so that $A_2(z)<0$ if $z\in\mathcal{C}_{n,\zeta,\eta_0/2}$.

Integrating \eqref{thirdest} between $t=0$ and $t=T_{\mathrm{coll}}^{\pm}$ (for which $z_1(T_{\mathrm{coll}}^{\pm})=z_2(T_{\mathrm{coll}}^{\pm})$), we obtain
\begin{equation*}
T_{\mathrm{coll}}^{\pm}\leq\frac{\pi|z_1(0)-z_2(0)|^2}{2(1-c(\zeta))}\leq\frac{\pi\zeta_0^2}{2(1-c(\zeta_0))}=\frac{\pi\zeta_0^2\eta_0^2}{2(\eta_0^2-8\zeta_0^2-4(n-2)\zeta_0\eta_0)},
\end{equation*}
where we have used that $z\in\mathcal{C}_{n,\zeta_0,\eta_0/2}$ and the monotonicity of $c(\zeta)$ for $\zeta>0$.
Estimate \eqref{Tcollprime} is proved.

For $z\in\mathcal{C}_{n,\zeta,\eta_0/2}$ and $i=3,\ldots,n$, we have that 
\begin{equation*}
\dot z_i(t)=B_i(z(t)):=-\nabla_{z_i}\E_n(z(t)).
\end{equation*}
By using the expression of $\E_n$ in \eqref{renensplit}, we have to estimate
\begin{equation}\label{est10}
\begin{split}
|B_i(z)|\leq & \sum_{\substack{j=3\\j\neq i}}^n |\nabla_{z_i} G_\Om(z_i,z_i)|+\frac12|\nabla_{z_i} h_\Om(z_i)|+|\nabla_{z_i} G_\Om(z_i,z_1)-\nabla_{z_i} G_\Om(z_i,z_2)| \\ 
=: & B_i^1+B_i^2+B_i^3.
\end{split}
\end{equation}
We notice that we separate the contributions of $z_1$ and $z_2$ because finer estimates are available for them since $|z_1-z_2|<\zeta$.
We estimate the three contributions separately.
Using \eqref{dyG_bound}, the fact that $i,j\notin\{1,2\}$, we have 
$$|\nabla_{z_i}G_\Om(z_i,z_j)|\leq\frac{1}{2\pi|z_i-z_j|}+\frac1{2\pi d_1(z_i)}\leq\frac1{\pi d_{n-2}(z'')},$$
for each $i,j\in\{3,\ldots,n\}$, and $j\neq i$, which implies that
\begin{equation}\label{921}
B_i^1\leq\frac{n-3}{\pi d_{n-2}(z'')},\qquad\text{for $i\in\{3,\ldots,n\}$}.
\end{equation}
We use \eqref{esthfar} to estimate $B_i^2$.
By \eqref{estnablah} we can bound
\begin{equation}\label{922}
B_i^2\leq\frac{\max\{-\log d_1(z_i),\lo\}}{\pi d_1(z_i)}, \qquad\text{for $i\in\{3,\ldots,n\}$}.
\end{equation}
We are left with the estimate for $B_i^3$.
We will apply the mean value theorem to the function $x\mapsto H_i(x):=\nabla_{z_i}G_\Om(x,z_i)$.
Let $\theta$ belong to the line segment joining $z_1$ and $z_2$, and recall the definition \eqref{eq:Green} of $G_\Om$. Then
\begin{equation*}
\begin{split}
B_i^3= & |H_i(z_1)-H_i(z_2)|=|\nabla_xH_i(\theta)(z_1-x_2)| \\
= & \!\left|-\frac1{2\pi}(\nabla_x\nabla_{z_i} \log|\theta-z_i|)(z_1-z_2)+(\nabla_x\nabla_{z_i}k_\Om(\theta,z_i))(z_1-z_2)\right| \\
\leq & \!\left|\frac1{2\pi|\theta-z_i|^2}\left(\mathbb{I}-2\frac{\theta-z_i}{|\theta-z_i|}\otimes\frac{\theta-z_i}{|\theta-z_i|}\right)(z_1-z_2)\right|+|(\nabla_x\nabla_{z_i}k_\Om(\theta,z_i))(z_1-z_2)|;
\end{split}
\end{equation*}
here, $\mathbb{I}\in\R^{2\times2}$ denotes the identity matrix.
We notice that matrices of the type $\mathbb{I}-2e\otimes e$, where $e$ is a unit vector, are reflections, and their operator norm is $1$, so that, recalling the argument used in \eqref{Taylor2} and that $z\in\mathcal{C}_{n,\zeta,\eta_0/2}$,
\begin{equation}\label{923}
\begin{split}
B_i^3\leq & \frac{\zeta}{2\pi|\theta-z_i|^2}+\frac{\zeta}{\pi \dist(\theta,\partial\Omega)d_{n-2}(z'')} \\
\end{split}
\end{equation}
Putting \eqref{921}, \eqref{922}, and \eqref{923} together, and noting that $d_{n-2}(z'')\leq d_1(z_i)$, \eqref{est10} finally becomes
\begin{equation*}
\begin{split}
|B_i(z)|\leq & \frac{n-3+\max\{-\log d_{n-2}(z''),\lo\}}{\pi d_{n-2}(z'')}+\frac{\zeta}{2\pi|\theta-z_i|^2} \\
& + \frac{\zeta}{\pi \dist(\theta,\partial\Omega)d_{n-2}(z'')}.
\end{split}
\end{equation*}
Considering the ensemble $z_\theta:=(\theta,z'')=(\theta,z_3,\ldots,z_n)\in\Omega^{n-1}$, by definition of $d_{n-1}$ (see \eqref{1001}), we have that $d_{n-1}(z_\theta)\leq\min\{d_{n-2}(z''),\dist(\theta,\partial\Om),|\theta-z_i|\}$, so that
\begin{equation}\label{925}
\begin{split}
|B_i(z)|\leq & \frac{n-3+\max\{-\log d_{n-1}(z_\theta),\lo\}}{\pi d_{n-1}(z_\theta)}+\frac{3\zeta}{2\pi d_{n-1}^2(z_\theta)} \\
\leq & \frac{n-3+\lo+d_{n-1}^{-1}(z_\theta)}{\pi d_{n-1}(z_\theta)}+\frac{3\zeta}{2\pi d_{n-1}^2(z_\theta)} \\
= & \frac{2(n-3+\lo)d_{n-1}(z_\theta)+2+3\zeta}{2\pi d_{n-1}^2(z_\theta)}=\frac{\Lo d_{n-1}(z_\theta)+2+3\zeta}{2\pi d_{n-1}^2(z_\theta)},
\end{split}
\end{equation}
where we have used that $\max\{-\log d_{n-1}(z_\theta),\lo\}\leq\lo+d_{n-1}^{-1}(z_\theta)$ and we have defined $\Lo:=2(n-3+\lo)$.

As in the proof of Theorem \ref{thm:boundary_collision}, we want to use \eqref{925} to obtain bounds on $d_{n-1}(z_\theta)$, that is, we want to control how close any of the dislocations in $z''$ approach one another or the boundary, and additionally prevent them from getting too close to $z_1$ and $z_2$.
To do this, we bound the rate at which $d_{n-1}(z_\theta)$ decreases.
Indeed, by \eqref{925},
$$d_{n-1}(z_\theta(t))-d_{n-1}(z_\theta(0))\geq -2\int_0^t \max_{i=2,\ldots,n}|\dot{z}_i(s)|\ds\geq -\int_0^t \frac{\Lo d_{n-1}(z_\theta)+2+3\zeta}{\pi d_{n-1}^2(z_\theta)}\ds,$$
and, by writing the left-hand side as $\int_0^t \dot d_{n-1}(z_\theta(s))\ds$ we are led to solving 
\begin{equation*}
\dot d_{n-1}(z_\theta)\geq -\frac{\Lo d_{n-1}(z_\theta)+2+3\zeta}{\pi d_{n-1}^2(z_\theta)}
\end{equation*}
with initial condition $d_{n-1}(z_\theta(0))=\eta_0$.
Noting that the corresponding equation is separable, and defining $\chi=\chi(\zeta):=2+3\zeta$, we find that the time evolution $t\mapsto d_{n-1}(z_\theta(t))$ satisfies
\begin{equation*}
\begin{split}
t\leq & \frac\pi{\Lo^2}\left(\chi(d_{n-1}(z_\theta(t))-\eta_0)-\frac\Lo2(d_{n-1}^2(z_\theta(t))-\eta_0^2)\right)\\
& +\frac{\pi\chi^2}{\Lo^3} \log\left(\frac{\Lo\eta_0+\chi}{\Lo d_{n-1}(z_\theta(t))+\chi}\right).
\end{split}
\end{equation*}
This entails that, given $\eta\in(0,\eta_0)$, $d_{n-1}(z_\theta(t)) \geq \eta$ for all $t\leq T(\eta)$, with
\begin{equation}\label{927}
T(\eta):=\frac\pi{\Lo^2}\left(\chi(\eta-\eta_0)-\frac\Lo2(\eta^2-\eta_0^2)+\frac{\chi^2}{\Lo} \log\left(\frac{\Lo\eta_0+\chi}{\Lo\eta+\chi}\right)\right).
\end{equation}
As for the time $T(\gamma)$ in \eqref{eq:Tprime}, the time $T(\eta)$ above is the earliest time at which any two points in $z_\theta$ are $\eta$ apart from each other or from the boundary.
Therefore, a sufficient condition to observe $z_1$ and $z_2$ colliding before any other collision event is that $T_{\mathrm{coll}}^{\pm}< T(\eta_0/2)$, that is, recalling \eqref{Tcollprime} and \eqref{927}, 
\begin{equation}\label{928}
\frac{\pi\zeta_0^2}{2(1-c(\zeta_0))}<\frac\pi{\Lo^2}\left(\frac{3\Lo\eta_0^2}8-\frac{\chi(\zeta_0)\eta_0}2+\frac{\chi(\zeta_0)^2}{\Lo} \log\left(2\frac{\Lo\eta_0+\chi(\zeta_0)}{\Lo\eta_0+2\chi(\zeta_0)}\right)\right).
\end{equation}
Recalling that $\chi(\zeta_0)=2+3\zeta_0$, it is clear that the right-hand side above is of order $1$ as $\zeta_0\to0$, whereas the left-hand side is of order $\zeta_0^2$. 
Since both sides in \eqref{928} are positive, by possibly reducing it, $\zeta_0$ can be chosen such that \eqref{928} is satisfied.
The theorem is proved.
\end{proof}

\begin{remark}
Contrary to Theorems \ref{thm:fatal} and \ref{thm:boundary_collision}, Theorem \ref{thm:collision} does not require any regularity on the boundary $\partial\Om$, since bounding the forces due to the boundary and other dislocations relies only upon estimates \eqref{dyG_bound} and \eqref{estnablah}, which are independent of the assumption that $\partial\Omega$ is $\CC^2$.
\end{remark}

\section{Comparison of collision bounds - Explicit examples and numerics}\label{sect:collision}
In this section, we compare our analytical results with exactly solvable cases (see also the discussion in Section 3 of \cite{BFLM15}) and numerical simulations.
More precisely, we consider domains that are the unit disk, the half plane, and the plane.
Although the theorems and lemmas from the previous sections apply only to bounded domains, a careful inspection of the renormalised energy \eqref{renen} shows that the evolution \eqref{500} is also well defined in the case of unbounded domains.
The exactly solvable cases that we consider are: one dislocation $z$ in (i) the half plane and (ii) the unit disk, (iii) two dislocations $z_1$, $z_2$ of opposite Burgers modulus $b_1=+1=-b_2$ in the unit disk, and (iv) two dislocations in the plane.
In cases (i) and (ii) the dislocation hits the boundary in finite time; case (iii) shows a variety of scenarios, \emph{i.e.}, collisions with the boundary, collision between dislocations, and unstable equilibria; in case (iv) the dislocations will collide, or the evolution exists for all time.
In cases (i) and (ii), the renormalised energy \eqref{renen} reads, recalling \eqref{eq:h_Om},
\begin{equation}\label{951}
\E_1(z)=\frac12 h_\Om(z)=\frac12 k_\Om(z,z).
\end{equation}
In case (iii), the energy takes the form \eqref{renen2}, whereas in case (iv) it reduces to
\begin{equation}\label{952}
\E_2(z_1,z_2)=-\frac{b_1b_2}{2\pi} \log|z_1-z_2|.
\end{equation}

To numerically validate our conclusions, we collected data on the time required for dislocations with random initial conditions in the circle to hit the boundary. We also plotted trajectories for a single dislocation in domains whose Green's functions is not known explicitly, namely a square and a cardioid, which show agreement with the conclusion of Theorem \ref{thm:fatal}.

\subsection{One dislocation in the half plane}
Let $n=1$ and let $z=(x_1,x_2)\in\Om:=\R^2_+$, the upper half plane.  
Notice that $\partial\Omega=\{(x_1,0)\in\R^2\}$.
Since, given $u,v\in\Om$, the Green's function is $G_\Om(u,v)=-\frac1{2\pi}\log|u-v|+\frac1{2\pi}\log|\bar u-v|$, with $\bar u=(u_1,-u_2)$, and since from \eqref{eq:Green} $k_\Om(u,v)=\frac1{2\pi}\log|\bar u-v|$, from \eqref{eq:h_Om} we have $h_\Om(u)=\frac1{2\pi}\log|\bar u-u|$, the renormalised energy \eqref{951} for one dislocation in the half plane reads
\begin{equation}\label{931}
\E_1(z)=\frac12 h_\Om(z)=\frac{1}{8\pi}\log(2x_2^2),
\end{equation}
from which it emerges that $\E_1$ is translation-invariant with respect to the coordinate $x_1$, so that we consider $z=(0,x_2)$.
The equation of motion for $z$ is now determined by
\begin{equation}\label{932}
\dot z(t)=\left(
\begin{array}{c}
\dot x_1(t) \\
\dot x_2(t)
\end{array}\right)=-\frac12\nabla_z h_\Om(z(t))=-\frac1{4\pi x_2(t)}
\left(
\begin{array}{c}
0\\
1
\end{array}\right),
\end{equation}
with initial condition $z(0)=(0,\delta)$.
Solving the dynamics \eqref{932}, yields the existence of $T_{\mathrm{coll}}^{\partial\Om}=2\pi\delta^2$ such that $x_1(t)=0$ and $x_2(t)=\sqrt{\delta^2-t/2\pi}$, for all $t\in[0,T_{\mathrm{coll}}^{\partial\Om}]$.
The collision time $T_{\mathrm{coll}}^{\partial\Om}$ is characterised as that time for which $x_2(T_{\mathrm{coll}}^{\partial\Om})=0$.

For $n=1$ and letting $\rhobar\to+\infty$, the dislocation $z$ is in $\Dom_{1,\delta_0,\infty}$ for every $\delta_0>\delta$.
By Theorem \ref{thm:boundary_collision}, the upper bound on the collision time (see \eqref{eq:ubht}) becomes $T_{\mathrm{coll}}^{\partial\Om}\leq 2\pi\delta^2$, since in this situation the constant $c(\delta)$ in \eqref{eq:c} vanishes.
This agrees exactly with the collision time obtained solving \eqref{932}.

\subsection{One dislocation in the unit disk}
Let $n=1$ and let $z=(x_1,x_2)\in\Om:=B_1(0)$, the unit disk, be such that $z\neq0$.
Setting $\delta=d_1(z)=1-|z|$, and evaluating \eqref{eq:h_interior_ball} for $\rhobar=1$, the energy \eqref{931} reads
\begin{equation*}
\E_1(z)=\frac12 h_\Om(z)=\frac1{4\pi}\log(2\delta-\delta^2).
\end{equation*}
The equation of motion $\dot z(t)=-\nabla_z \E_1(z(t))$ takes the form
\begin{equation}\label{934}
\dot z(t)=\frac{z}{2\pi(1-|z|^2)},
\end{equation}
which, by taking the scalar product with $z$ and setting $R(t):=|z(t)|^2$, yields the implicit equation
\begin{equation}\label{935}
\log \frac{R(t)}{(1-\delta)^2}-R(t)+(1-\delta)^2=\frac{t}\pi,
\end{equation}
where we have used $R(0)=|z(0)|^2=(1-\delta)^2$.
Solving \eqref{935} for $T_{\mathrm{coll}}^{\partial\Om}$ for which $R(T_{\mathrm{coll}}^{\partial\Om})=1$, gives
\begin{equation}\label{936}
T_{\mathrm{coll}}^{\partial\Om}=\pi(\delta^2-2\delta-2\log(1-\delta))=2\pi\delta^2+O(\delta^3),
\end{equation}
as $\delta\to0$.

In this case, the dislocation $z$ is in $\Dom_{1,\delta_0,1}$, for every $\delta_0\in(\delta,1)$ and $c(\delta)$ in \eqref{eq:c} becomes $c(\delta)=2\delta C_\sigma$, where $C_\sigma$ is given in \eqref{Csigma}.
As we can choose $\sigma=\delta$, we have $c(\delta)=O(\delta)$, so that the upper bound \eqref{eq:ubht} on the collision time obtained in Theorem \ref{thm:boundary_collision} becomes $T_{\mathrm{coll}}^{\partial\Om}\leq2\pi\delta^2+O(\delta^3)$, which again agrees exactly with the collision time obtained solving \eqref{934}.

Notice that if the dislocation is located at the centre of the disk, that is $z=0$, then $\E_1(z)=0$ and also the velocity $\dot z(t)$ in \eqref{934} vanishes, so that the dislocation will not move.
Consistently, the exact form of the collision time $T_{\mathrm{coll}}^{\partial\Om}$ in \eqref{936} tends to $+\infty$ as $\delta\to1$.
Similarly, the upper bound obtained in Theorem \ref{thm:boundary_collision} tends to $+\infty$ (noting how $c_\sigma$ in \eqref{913} depends on $\sigma$, it is clear that $c_\sigma\to+\infty$ as $\sigma\to1$).

\subsection{Two dislocations in the unit disk}\label{twodisk}
Let $n=2$, let $z_1,z_2\in\Om:=B_1(0)$, and let the Burgers moduli be $b_1=+1$, $b_2=-1$.
Taking into account \eqref{eq:Green} and \eqref{eq:Green_func_ball_interior}, the function $k_\Om(z_1,z_2)$ has the expression
\begin{equation*}
k_\Om(z_1,z_2)=\frac1{4\pi}\log(1-2z_1\cdot z_2+|z_1|^2|z_2|^2),
\end{equation*}
so that, using \eqref{eq:h_Om}, the renormalised energy \eqref{renen2} reads
\begin{equation}\label{938}
\begin{split}
\E_2(z_1,z_2)= & \frac1{2\pi}\log|z_1-z_2|+\frac1{4\pi}\log(1-|z_1|^2)+\frac1{4\pi}\log(1-|z_2|^2) \\
& -\frac1{4\pi}\log(1-2z_1\cdot z_2+|z_1|^2|z_2|^2).
\end{split}
\end{equation}
It is immediate to see that \eqref{938} is invariant under rotations in the plane, so that solving the equations of motion \eqref{eq_mot1} and \eqref{eq_mot2} is equivalent to solving the following equations for the radial coordinates $r_1(t):=|z_1(t)|$ and $r_2(t):=|z_2(t)|$, and for the angle $\varphi(t)$ formed by $z_1(t)$ and $z_2(t)$ (satisfying $z_1(t)\cdot z_2(t)=r_1(t)r_2(t)\cos\varphi(t)$).
Plugging the definitions of $r_1(t)$, $r_2(t)$, and $\varphi(t)$ in \eqref{eq_mot1} and \eqref{eq_mot2} (and suppressing the time dependence for the sake of readability), we obtain
\begin{equation}\label{939}
\begin{split}
\dot r_1=  \frac1{2\pi}\bigg[ & \frac{r_2\cos\varphi-r_1}{r_1^2+r_2^2-2r_1r_2\cos\varphi}+\frac{r_1}{1-r_1^2} -\frac{r_2\cos\varphi-r_1r_2^2}{1-2r_1r_2\cos\varphi+r_1^2r_2^2}\bigg], \\
\dot r_2=  \frac1{2\pi}\bigg[ & \frac{r_1\cos\varphi-r_2}{r_1^2+r_2^2-2r_1r_2\cos\varphi}+\frac{r_2}{1-r_2^2} -\frac{r_1\cos\varphi-r_1^2r_2}{1-2r_1r_2\cos\varphi+r_1^2r_2^2}\bigg], \\
\dot\varphi = \frac{1}{2\pi} \bigg[ & \frac{(r_1^2+r_2^2)(r_1^2+r_2^2-r_1^2r_2^2-1)\sin\varphi} {r_1r_2(1-2r_1r_2\cos\varphi+r_1^2r_2^2)(r_1^2+r_2^2-2r_1r_2\cos\varphi)}\bigg].
\end{split}
\end{equation}
We are going to discuss particular scenarios.
Let us consider the initial condition with $z_1(0)$ and $z_2(0)$ aligned on a diameter on opposite sides of the centre, that is, when $\varphi(0)=\pi$.
The equation for $\varphi$ in \eqref{939} reduces to $\dot\varphi(t)=0$ for all $t>0$, that is, the dislocations keep the alignment.
The first two equations in \eqref{939} then reduce to
\begin{equation*}
\dot r_1=  \frac1{2\pi}\bigg[  -\frac{1}{r_1+r_2}+\frac{r_1}{1-r_1^2} +\frac{r_2}{1+r_1r_2}\bigg], \;
\dot r_2=  \frac1{2\pi}\bigg[  -\frac{1}{r_1+r_2}+\frac{r_2}{1-r_2^2} +\frac{r_1}{1+r_1r_2}\bigg].
\end{equation*}
If $z_1(0)=-z_2(0)$, since $r_1(0)=r_2(0)=:r_0$, the right-hand sides of the equations above are the same, so that $r_1(t)$ and $r_2(t)$ evolve with the same law, namely, calling $r(t):=r_1(t)=r_2(t)$,
\begin{equation}\label{940}
\dot r(t)=  \frac{r^4(t)+4r^2(t)-1}{4\pi r(t)(1-r^4(t))},\qquad\text{with $r(0)=r_0$.}
\end{equation}
We can now study the sign of $\dot r(0)$ to find if the dislocation will collide with each other or with the boundary, or if the are in equilibrium.
We obtain that if $r_0=\sqrt{\sqrt5-2}$ then the dislocations are in equilibrium and do not move, since for this value of the initial condition, all the right-hand sides of \eqref{939} vanish.
If $r_0<\sqrt{\sqrt5-2}$, the dislocations will collide with each other at the centre of the disk (by symmetry), whereas if $r_0>\sqrt{\sqrt5-2}$, the dislocations will collide with the boundary.

We can now compute the collision time in the last two scenarios by integrating \eqref{940}.
It is convenient to set $R(t):=r^2(t)$, so that it solves
\begin{equation*}
\dot R(t)=  \frac{R^2(t)+4R(t)-1}{2\pi (1-R^2(t))},\qquad\text{with $R(0)=R_0=r_0^2$.}
\end{equation*}
We obtain
$$
R(t)-R_0+\bigg(\frac{4\sqrt5}5-2\bigg)\log\frac{R(t)+2-\sqrt5}{R_0+2-\sqrt5}-\bigg(\frac{4\sqrt5}5+2\bigg)\log\frac{R(t)+2+\sqrt5}{R_0+2+\sqrt5}=-\frac{t}{2\pi},
$$
from which we find the times of collision with the boundary, by imposing $R(T_{\mathrm{coll}}^{\partial\Om})=1$, and of $z_1$ with $z_2$, by imposing $R(T_{\mathrm{coll}}^{\pm})=0$.
The computations give
\begin{subequations}
\begin{equation}\label{943a}
T_{\mathrm{coll}}^{\partial\Om}=  2\pi\bigg[r_0^2-1-\frac{4\sqrt5}5\log\frac{(7-3\sqrt5)(r_0^2+2+\sqrt5)}{2(r_0^2+2-\sqrt5)}-2\log\frac{r_0^4+4r_0^2-1}{4}\bigg],
\end{equation}
\begin{equation}\label{943b}
T_{\mathrm{coll}}^{\pm}= 2\pi\bigg[r_0^2+\frac{4\sqrt5}5\log\bigg(\frac{(9+4\sqrt5)(r_0^2+2-\sqrt5)}{-(r_0^2+2+\sqrt5)}\bigg)-2\log(1-4r_0^2-r_0^4)\bigg].
\end{equation}
\end{subequations}

When $r_0\to1$ in \eqref{943a}, setting $\delta=1-r_0$, asymptotically $T_{\mathrm{coll}}^{\partial\Om}$ behaves like $2\pi\delta^2$, which shows agreement with the upper bound found in \eqref{eq:ubht}.

When $r_0\to0$ in \eqref{943b}, setting $\zeta=2r_0$, asymptotically $T_{\mathrm{coll}}^{\pm}$ behaves like $\pi\zeta^2/2$, which shows agreement with the upper bound found in \eqref{Tcollprime}.

\subsection{Two dislocations in the plane}
Let $n=2$, let $z_1,z_2\in\Omega:=\R^2$, and let $b_1$ and $b_2$ be their Burgers moduli.
Recalling \eqref{952}, it is easy to see that the renormalised energy is translation invariant, so that we can choose $z_2(0)=-z_1(0)$.
The equations of motion read
\begin{equation}\label{944}
\dot z_1(t)=\frac{b_1b_2}{2\pi}\frac{z_1(t)-z_2(t)}{|z_1(t)-z_2(t)|^2}=-\dot z_2(t),
\end{equation}
so that the barycentre $z(t):=\frac12(z_1(t)+z_2(t))$, satisfies $\dot z(t)=0$ for all times.
Since $z(0)=0$, the dislocations are always at a symmetric position across the centre of the coordinate system.
Therefore, \eqref{944} becomes
\begin{equation}\label{945}
\dot z_1(t)=\frac{b_1b_2}{4\pi}\frac{z_1(t)}{|z_1(t)|^2}=-\dot z_2(t).
\end{equation}
Integrating \eqref{945}, one obtains
$$z_1(t)=z_1(0)\sqrt{1+\frac{b_1b_2t}{2\pi|z_1(0)|^2}}=-z_2(t),$$
from which it is clear that if the dislocations have equal Burgers moduli, that is if $b_1b_1=1$, then the evolution exists for all times $t\geq0$ and $z_1$ and $z_2$ grow arbitrarily far apart.
If, on the other side, $b_1b_1=-1$, then the dislocations attract and the evolution exists up to the collision time $2\pi|z_1(0)|^2$, which again is in agreement with the estimates on $T_{\mathrm{coll}}^{\pm}$ in \eqref{Tcollprime}, again recalling that $\zeta=2|z_1(0)|$.

\subsection{Plots from numerical simulations}
Here, we include plots from numerical simulations of the dynamics in different scenarios. 
All calculations are performed in Matlab R$2016$b and the trajectories were integrated using a built--in stiff solver.

Figure \ref{yeah} shows the superposition of 5000 runs of the scenario described in Subsection \ref{twodisk}, where initial conditions have been randomly generated.
In all of the runs in the unit disk ($\rhobar=1$), we have chosen $\delta_0=0.2$ and $\gamma_0=0.5$, so that the initial condition $(z_1(0),z_2(0))\in\Dom_{1,0.2,0.5}$ (see \eqref{eq:Geomofz}); we have also chosen Burgers moduli $b_1=+1$ (for the dislocation close to the boundary) and $b_2$ randomly chosen between $+1$ and $-1$ at each run.
In the numerics, explicit formulae for the forces were implemented directly.
In this case, evaluating $c(\delta_0)$ in \eqref{eq:c} gives $c(\delta_0)\approx 116.7$, which makes the estimate \eqref{eq:ubht} invalid.
Nevertheless, at leading order, $T_{\mathrm{coll}}^{\partial\Omega}\leq 2\pi\delta_0^2\approx 0.2513$, which bounds all times computed, as it can be verified in the histogram plot in Figure \ref{yeah}; the peak is due to the fact that when $b_2=+1$, the dislocation $z_1$ is both attracted by the boundary and pushed towards it by $z_2$.

\begin{figure}[h]
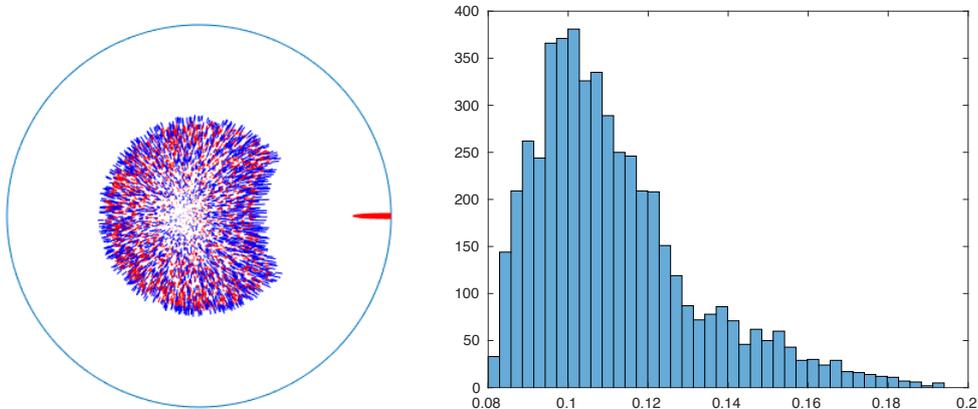

\begin{center}
\includegraphics[scale=.6]{RandomTrajectories2.pdf}\qquad
\includegraphics[scale=.55]{Histogram2.pdf}
\caption{5000 runs and histogram of hitting times. Red corresponds to $b_i=+1$, blue to $b_i=-1$.}
\label{yeah}
\end{center}
\end{figure}

Figure \ref{superyeah} shows plots of $80$ trajectories of one dislocation evolving in the square and in the cardioid. 
To numerically resolve $\nabla h_\Om$ in these cases, we used quadratic finite elements with a mesh generated by the package DistMesh, described in \cite{PS04}.
Both of these domains have an unstable equilibrium point at their centre, and initial conditions are chosen on a circle of radius $0.1$ centred at the equilibrium point. 
Due to the interaction with the boundary, the dislocation starts following a curved line and then hits the boundary perpendicularly (up to numerical artefacts), as indicated by \eqref{505} in Theorem \ref{thm:fatal} (see also estimate \eqref{estfatal}).
In the square, by symmetry, the dislocations starting on the diagonals move along them towards the corners.
We remark that the assumptions of Lemma \ref{th:h_derivative_bound}, which is crucial to prove Theorem \ref{thm:fatal}, explicitly exclude domains with corners, but to leading order, the conclusion appears to hold at smooth points of the boundary even in this case. 
We stress that the curved trajectories are a consequence of the interaction with the boundary and of its curvature.
\begin{figure}[h]
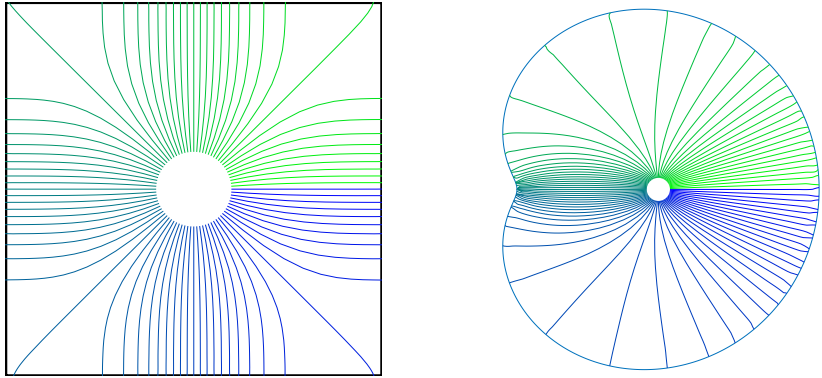

\begin{center}
\includegraphics[scale=.6]{SquareTrajectories4.pdf}\qquad\qquad
\includegraphics[scale=.6]{CardioidTrajectories2.pdf}
\caption{Superposition of $80$ trajectories of one dislocation in the square and in the cardioid.}
\label{superyeah}
\end{center}
\end{figure}

It would be of interest to study the behaviour near non--smooth boundary points further, with a particular view to understanding the behaviour of dislocations near cracks.

\section{Concluding comments}\label{conclusion}
We have studied the qualitative behaviour of the dynamics of screw dislocations in two-dimensional domains, under the assumption of linear isotropic mobility.
We dealt with unconstrained dynamics which is the crucial first step towards considering more realistic choices of mobility, such as enforcing glide directions \cite{BFLM15,CG99} or other more general nonlinear mobilities \cite{H15}.
In these cases, the equations of motion \eqref{500} read
\begin{equation*}
\dot z_i(t)=\mathcal{M}[-\nabla_{z_i}\E_n(z_1(t),\ldots,z_n(t))],
\end{equation*}
where the mobility function $\mathcal{M}$ prescribes a law relating the Peach--Koehler force $f_i(z):=-\nabla_{z_i}\E_n(z)$ experienced by the dislocation sitting at $z_i$ to its velocity.

\subsection{On more general mobility functions}
In \cite{CG99}, a dissipative formulation is proposed to describe the motion of screw dislocations: dislocations are constrained to move along straight lines following the direction of maximal dissipation among finitely many \emph{glide directions}.
These are a finite set of lattice (unit) vectors $\mathcal{G}\subset\R^2$, such that $\mathrm{span}(\mathcal{G})=\R^2$ (which means that $\mathcal{G}$ contains at least two linearly independent vectors) and such that $-\mathcal{G}=\mathcal{G}$ (which means that $\mathcal{G}$ is symmetric under inversion).

In this case,  the velocity field is given by
\begin{equation*}
\mathcal{M}[f_i(z)]=(f_i(z)\cdot g_i)g_i,\quad\text{where } g_i\in\argmax_{g\in\mathcal{G}} \{f_i(z_1,\ldots,z_n){\cdot}g\},
\end{equation*}
from which it is clear that glide directions reduce the modulus of the velocity.
From elementary geometric considerations concerning scalar products in the plane, a dislocation moves fastest when there exists a glide direction $g$ aligned with the Peach--Koehler force, whereas it moves slowest when the Peach--Koehler force is aligned with the bisector of the largest angle among glide directions.
Using this facts, it can be checked that the qualitative behaviour remains as described in Theorem \ref{thm:boundary_collision} and Theorem \ref{thm:collision}.

More generally, we believe that the results contained in this paper are suitable to treat more general mobility functions $\mathcal{M}$ satisfying appropriate growth conditions at infinity; see \cite{H15} for an example of such a case.

\subsection{Review of achievements}\label{upbeat}
We focused on the interaction of one dislocation with the boundary and on the collision of two dislocations. 
In the former case, we have analytically shown that, if sufficiently close to the boundary, dislocations experience a force directed along the outward normal to the boundary at the nearest point, thus formalising the fact that free boundaries attract dislocations (see \cite{LMSZ} for the behaviour with different boundary conditions), and that a dislocation sufficiently close to the boundary collides with in finite time.
In the latter, we have proved that two dislocations of opposite Burgers moduli that are sufficiently close to each other collide.
In both cases, we have found an upper bound for the collision time in terms of the geometry of the initial configuration and we have given sufficient conditions under which no other collisions happen.
These sufficient conditions are encoded in inequalities \eqref{eq:time_comparison_bdry} and \eqref{928}, where the different scaling of the left hand sides in $\delta_0$ and $\zeta_0$, respectively, compared to the right hand sides may be heuristically viewed as a time--scale separation. 
Indeed, in the dynamics described by \eqref{500}, dislocations that are close to a blow--up event acquire infinite speed. 

Moreover, we validated our analytical results by devising numerical experiments to show their consistency. 
The output of the numerics is contained in \eqref{yeah}, where a plot of the dynamics for two dislocations in the disk is presented, together with a histogram of hitting times, which is consistent with the bound \eqref{eq:ubht} on $T_{\mathrm{coll}}^{\partial\Omega}$ provided in Theorem \ref{thm:boundary_collision}.
Finally, Figure \ref{superyeah} shows numerical experiments for different domains, namely a square and a cardioid, both having an unstable equilibrium at their centre.
We remark that the square domain does not satisfy the hypotheses of Theorem \ref{thm:boundary_collision}, because of the corners; nonetheless, the dynamics can be solved numerically.

\appendix
\section{Interaction functions for the interior and exterior of a disk}

\label{sec:disk_funcs}\label{appendix}
In this appendix, we recall the definitions of Green's functions on the
interior and exterior of the disk $B_\rhobar(0)$, and compute $k_{B_\rhobar(0)}$
and $h_{B_\rhobar(0)}$ in these cases. 

We start by recalling that if $x,y\in \overline{B_\rhobar(0)}$, then, defining $x^*:=\rhobar^2 x/|x|^{2}$,
\begin{equation}\label{eq:Green_func_ball_interior}
G_{B_\rhobar(0)}(x,y)=-\frac1{2\pi}\bg[\log|x-y|-\log\bg(\frac{|x||x^*-y|}{\rhobar}\bg)\bg].
\end{equation}
Applying \eqref{eq:Green} and \eqref{eq:h_Om}, we find that
\begin{equation*}
  k_{B_\rhobar(0)}(x,y) = \frac{1}{2\pi}\log\left(\frac{|x||x^*-y|}{\rhobar}\right)\quad
  \text{and}\quad h_{B_\rhobar(0)}(x) = \frac{1}{4\pi}\log\left(\frac{\rhobar^2-|x|^2}{\rhobar}\right).
\end{equation*}

The Green's function on the exterior of the ball may be computed by a
further circular reflection. Let $x,y\in 
B_\rhobar(0)^c$. By considering the
conformal change of coordinates $x^*:= \rhobar^2 x/|x|^2$ and $y^*:= \rhobar^2 y/|y|^2$, it
is straightforward to check that
\begin{equation*}
  G_{B_\rhobar(0)^c}(x,y) = G_{B_\rhobar(0)}(x^*,y^*).
\end{equation*}
Applying \eqref{eq:Green}, after some algebraic manipulation using the properties of the logarithm,
we find that
\begin{equation*}
\begin{split}
  k_{B_\rhobar(0)^c}(x,y)&=G_{B_\rhobar(0)}(x^*,y^*)+\frac{1}{2\pi}\log|x-y|\\
  &=\frac1{2\pi}\bg[\log|x-y|+\log\bg(\frac{|x^*||x-y^*|}{\rhobar}\bg)-\log|x^*-y^*|\bg]\\
  &=\frac{1}{2\pi}\log\left(\frac{|y||x-y^*|}{\rhobar}\right).
\end{split}
\end{equation*}
Consequently, \eqref{eq:h_Om} implies
$$h_{B_\rhobar(0)^c}(x) = \frac{1}{2\pi}\log\left(\frac{|x|^2-\rhobar^2}\rhobar\right).$$
By writing $|x| = \rhobar\pm d_1(x)$, 
we obtain 
\begin{align}
  h_{B_\rhobar(0)}(x) &= \frac1{2\pi}\log\bg|2d_1(x)-\frac{d_1^2(x)}\rhobar\bg|
  \quad\text{for any }x\in B_\rhobar(0),\text{ and}\label{eq:h_interior_ball}\\
  h_{B_\rhobar(0)^c}(x) &= \frac1{2\pi}\log\bg|2d_1(x)+\frac{d_1^2(x)}\rhobar\bg| \quad\text{for any }x\in B_\rhobar(0)^c.
  \nonumber
\end{align}
We remark that the former expression is a multiple of (2.3) in \cite{CF85}. 
In both cases, $h_\Om$ diverges logarithmically to $-\infty$ as $x$ approaches $\partial\Om$.

\bigskip

\section*{Acknowledgments}
T.H.\@ thanks Carnegie Mellon University, \'Ecole des Ponts, INRIA, and the University of Warwick, and M.M.\@ thanks SISSA and Technische Universit\"at M\"unchen, where this research was carried out.
Both authors are thankful to Irene Fonseca and Giovanni Leoni for suggesting the topic of research, and to Timothy Blass for helpful discussions.
M.M.\@ is a member of the Gruppo Nazionale per l'Analisi Matematica, la Probabilità e le loro Applicazioni (GNAMPA) of the Istituto Nazionale di Alta Matematica (INdAM).

The research of T.H.\@ was funded both by a public grant overseen by the French National Research Agency (ANR) as part of the \emph{Investissements d'Avenir} program (reference: ANR-10-LABX-0098), and by an Early Career Fellowship, awarded by the Leverhulme Trust.
The research of M.M.\@ was partially funded by the ERC Advanced grant \emph{Quasistatic and Dynamic Evolution Problems in Plasticity and Fracture} (Grant agreement no.: 290888) and by the ERC Starting grant \emph{High-Dimensional Sparse Optimal Control} (Grant agreement no.: 306274).

\bibliographystyle{alpha}

\bibliography{DislBDRY}

\end{document}